\renewcommand{\email}[2][]{%
  \ifx\emails\@empty\relax\else{\g@addto@macro\emails{,\space}}\fi%
  \@ifnotempty{#1}{\g@addto@macro\emails{\textrm{(#1)}\space}}%
  \g@addto@macro\emails{#2}%
}
\definecolor{Gray}{gray}{0.9}
\definecolor{LightCyan}{rgb}{0.88,1,1}
 \newcommand{\ad}{\text{ad}}
 \newcommand{\h}{\mathfrak{h}}
  \theoremstyle{definition}
  \newtheorem{definition}{Definition}[section]
  \newtheorem{example}[definition]{Example}
  \newtheorem{remark}[definition]{Remark}
  \theoremstyle{plain}
  \newtheorem{lemma}[definition]{Lemma}
  \newtheorem{proposition}[definition]{Proposition}
  \newtheorem{corollary}[definition]{Corollary}
  \newcommand\Z{\mathbb{Z}}
\newcommand\g{\mathfrak{g}}
\renewcommand\a{\mathfrak{a}}
\newcommand\nm{\mathfrak{n}}
\newcommand{\svast}{\bBigg@{3}}
\newcommand{\vast}{\bBigg@{4}}
\newcommand{\Vast}{\bBigg@{5}}
\makeatother\title[]{Closed subsets of root systems and regular subalgebras}
\begin{document}
\date{\today}

\author[Douglas]{Andrew Douglas$^{1, 2}$}
\address{$^1$Mathematics and Physics Programs, The Graduate Center, City University of New York, USA}
\address{$^2$Department of Mathematics, New York City College of Technology, City University of New York, USA}
\email{adouglas2@gc.cuny.edu}

\author[de Graaf]{Willem A. de Graaf$^{3}$}
\address{$^3$Department of Mathematics, University of Trento, Povo (Trento), Italy}
\email{degraaf@science.unitn.it}

\keywords{Root systems, closed subsets of root systems, regular subalgebras} 
\subjclass[2010]{17B05, 17B10, 17B20, 17B30, 17B37}

\begin{abstract}
We describe an algorithm for classifying the closed subsets of a root system, up to conjugation by the associated Weyl group.  Such a classification for an irreducible root system is closely related to the classification of the regular subalgebras, up to inner automorphism, of the corresponding simple Lie algebra. We implement our algorithm to classify the closed subsets of the irreducible root systems of ranks $3$ through $7$. We present a complete description of the classification for the closed subsets of the rank $3$ irreducible root system. We employ this root system classification to classify all regular subalgebras of the rank $3$ simple Lie algebras.
We present only summary data for the classifications in higher ranks due to the large size of these classifications. Our algorithm is implemented in the language of the computer algebra system GAP.
\end{abstract}

\maketitle

\section{Introduction}

Closed subsets of root systems appear in a number of contexts. 
They are integral to the classification of regular semisimple subalgebras of
the exceptional Lie algebras \cite{dynkin, degraafa, lorent}.
Closed subsets of root systems also find application to classifications of the
reflection subgroups of finite and affine Weyl groups \cite{dyer}.  They
appear in the theory of Chevalley groups \cite{harebov}. And, the problem of
decomposing a root system into a union of two closed subsets has aided in the
classification of complex homogeneous spaces \cite{maly2}.

The problem of classifying all closed subsets of root systems is an unsolved problem, but incomplete results exist.  We mention three examples.
Borevich \cite{bor}
determined the number of topologies of a set of $n$ elements for $n\leq 8$.
Because topologies on a set of size $n$  correspond to closed sets in the root
system of type $A_{n-1}$ (see Section \ref{sec:top}), he therefore also
determined the number of closed sets in the root systems $A_l$ for $l\leq 7$.
Subsequently this has been taken further by other authors, see \cite{bm02} for
relatively recent work in this direction. Furthermore, in \cite{dch94} 
a classification of the invertible closed subsets of a root system was given.
Thirdly, the work of Dynkin \cite{dynkin} contains an algorithm to classify the
root subsystems of a root system, up to the action by the Weyl group.

In this article, we describe an algorithm for classifying the closed subsets of a root system, up to conjugation by the associated Weyl group.  Such a classification for an irreducible root system is closely related to the classification of the regular subalgebras, up to inner automorphism, of the corresponding simple Lie algebra. We implement our algorithm to classify the closed subsets of the irreducible root systems of ranks $3$ through $7$. We present a complete description of the classification for the closed subsets of the rank $3$ irreducible root systems. We employ this to classify all regular subalgebras of the rank $3$
simple Lie algebras. This is part of a bigger project to classify the
non semisimple subalgebras of small-dimensional simple Lie algebras,
see \cite{dc2, drsof, a2, c2, may}.
We present only summary data for the classifications in higher ranks due to the large size of these classifications. 

The article is organized as follows. In Section \ref{preliminaries}, we review root systems and closed subsets of root systems.
 In Section \ref{algorithm}, we describe our algorithm for classifying the closed subsets of root systems, along with its implementation in {\sf GAP}4
 \cite{GAP4}, using the {\sf SLA} package \cite{degraafa}. The latest
 version of this package (version 1.5) contains this implementation. We
 present the runtimes of the main steps of the algorithm on some sample
 inputs. All timings in this paper have been obtained on a 2.0Ghz processor with
 4GB of memory for {sf GAP}.

Next, in Section \ref{results}, we present the full classification of the closed subsets of the irreducible root system of rank $3$ derived from the implementation of our algorithm.   In this section we also present summary data for the classifications of  closed subsets of the irreducible root systems of ranks $4$ through $7$. 
In Section 
\ref{regular}, we discuss the relationship between  closed subsets of irreducible root systems and regular subalgebras of the corresponding simple Lie algebras. In this section, we also employ the classification of closed subsets of rank $3$ irreducible root systems to yield a classification of  regular subalgebras of the corresponding simple Lie algebras.

The ground field for Lie algebras in this article is the complex numbers. Unless explicitly stated otherwise, ``$\oplus$" denotes the direct sum of vector spaces,
not of Lie algebras.

{\bf Acknowledgement:} We thank Heiko Dietrich for his help with the
computation of the closed subsets of the root system of type $E_7$, for which
his computer worked several weeks.  AD would also like to thank WAdG and the Department of Mathematics at the University of Trento for their warm hospitality during a recent visit.

\section{Roots systems and their closed subsets}\label{preliminaries}

Let $\mathrm{E}$ be a Euclidean space with positive definite symmetric bilinear form denoted $(\alpha, \beta)$, for 
$\alpha, \beta \in \mathrm{E}$. A subset $\Phi$ of  $\mathrm{E}$  is called a {\it root system} in $\mathrm{E}$ if the following axioms 
are satisfied:
\vspace{2mm}

\noindent (R1) $\Phi$ is finite, spans $\mathrm{E}$, and does not contain $0$.\\
\noindent (R2) If $\alpha \in \Phi$, the only multiples of $\alpha$ in $\Phi$ are $\pm \alpha$.\\
\noindent (R3) If $\alpha \in \Phi$, the reflection  determined by $\alpha$ leaves $\Phi$ invariant.\\
\noindent (R4) If $\alpha, \beta \in \Phi$, then $\langle \beta,\alpha^\vee\rangle = \frac{2(\beta, \alpha)}{(\alpha,\alpha)} \in \mathbb{Z}$.

The root system $\Phi$ is called {\it irreducible} if it cannot be partitioned into the union of two
proper subsets such that each root in one set is orthogonal to each root in the other. Let $s_\alpha$ denote
the reflection determined by $\alpha\in \Phi$. The {\it Weyl group}
of a root system is the subgroup of $\mathrm{GL}(\mathrm{E})$ generated by the reflections $s_\alpha$, $\alpha\in \Phi$.


A subset $T$ of a root system $\Phi$ is {\it closed} if for any
$\alpha,\beta \in T$, $\alpha+\beta \in \Phi$ implies $\alpha+\beta \in T$. 
Any closed set $T$ is a disjoint union of its {\it symmetric} part $T^r =\{\alpha \in T | -\alpha \in T  \}$, 
and its {\it special} part $T^u=\{ \alpha \in T | -\alpha  \notin T \}$.

Closed subsets $T$ and $T'$ of a root system $\Phi$ are {\it conjugate} if there exists an element $w\in W(\Phi)$ such that
$w(T)=T'$. The following lemma is well-known (and easy to prove). 

\begin{lemma}\label{lem:deccl}
$T^r$ is a closed root subsystem of $\Phi$. For any two roots $\alpha \in T^u$ and $\beta \in T$ such that $\alpha+\beta$ is a root, we have that  $\alpha+\beta \in T^u$. In particular, $T^u$ is closed.
\end{lemma}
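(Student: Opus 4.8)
The plan is to treat the three assertions in order, noting that the middle one carries essentially all of the work and immediately yields the last.

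For the first assertion, I would first check that $T^r$ is closed as a subset: if $\alpha,\beta\in T^r$ with $\alpha+\beta\in\Phi$, then closedness of $T$ gives $\alpha+\beta\in T$, and applying closedness of $T$ to $-\alpha,-\beta\in T$ gives $-(\alpha+\beta)\in T$, so $\alpha+\beta\in T^r$; clearly $T^r=-T^r$. It then remains to see that a symmetric closed subset is a root subsystem, i.e.\ is stable under the reflections $s_\gamma$ for $\gamma\in T^r$. Fix $\gamma,\delta\in T^r$; I want $s_\gamma(\delta)=\delta-\langle\delta,\gamma^\vee\rangle\gamma\in T^r$. If $\langle\delta,\gamma^\vee\rangle=0$ this is immediate, so assume not and consider the $\gamma$-string through $\delta$, the unbroken chain of roots $\delta-p\gamma,\dots,\delta,\dots,\delta+q\gamma$ with $p,q\ge 0$ and $p-q=\langle\delta,\gamma^\vee\rangle$. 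Since $\gamma$ and $-\gamma$ both lie in $T^r\subseteq T$, repeatedly adding $\pm\gamma$ to $\delta$ and invoking closedness of $T$ shows the entire string lies in $T$. As $s_\gamma$ reverses this string, $s_\gamma(\delta)=\delta+(q-p)\gamma$ is one of its members (here $-p\le q-p\le q$), hence lies in $T$; running the same argument with $-\delta\in T^r$ in place of $\delta$ shows $-s_\gamma(\delta)=s_\gamma(-\delta)\in T$. Therefore $s_\gamma(\delta)\in T^r$.

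For the second assertion, let $\alpha\in T^u$ and $\beta\in T$ with $\alpha+\beta\in\Phi$. Closedness of $T$ gives $\alpha+\beta\in T$, so it suffices to show $-(\alpha+\beta)\notin T$. Suppose, for contradiction, that $-(\alpha+\beta)\in T$. Then $\beta$ and $-(\alpha+\beta)$ both lie in $T$, and their sum is $-\alpha$, which is a root; closedness of $T$ forces $-\alpha\in T$, contradicting $\alpha\in T^u$. Hence $-(\alpha+\beta)\notin T$ and $\alpha+\beta\in T^u$.

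The third assertion is now immediate: if $\alpha,\beta\in T^u$ and $\alpha+\beta\in\Phi$, then $\beta\in T^u\subseteq T$, so the second assertion applied to this pair gives $\alpha+\beta\in T^u$; thus $T^u$ is closed. The only step that is more than a one-line manipulation of the definition of a closed subset is the verification that the symmetric closed set $T^r$ is stable under reflections, and I expect the $\gamma$-string argument there to be the main (though modest) obstacle; it uses only the standard fact that root strings are unbroken.
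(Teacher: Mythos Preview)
Your proof is correct. The paper does not actually supply a proof of this lemma: it simply states that the result is ``well-known (and easy to prove)'' and moves on. Your argument fills in precisely the standard details one would expect---the contradiction argument for the second assertion is the key step, and your $\gamma$-string argument for reflection-stability of $T^r$ is the usual way to see that a symmetric closed set is a genuine root subsystem. There is nothing to compare against, but your write-up is sound and self-contained.
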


\subsection{Topologies on finite sets and closed subsets of the root systems of
type $A_{n-1}$}\label{sec:top}

Here we briefly describe the correspondence that exists between the
closed sets in the root systems of type $A_{n-1}$ and the topologies that a set
of size $n$ can have. To the best of our knowledge this correspondence has first
been described by Borevich (\cite{bor2}, \cite{bor}). Various computer
programs have been developed to enumerate the topologies of a finite set
(see for example \cite{bm02}). 

Let $n\geq 1$ and $X=\{1,2,\ldots,n\}$. A topology on $X$ is a collection
$\tau$ of subsets of $X$ such that $\emptyset, X\in \tau$ and the
union and intersection of elements of $\tau$ is again in $\tau$. The elements
of $\tau$ are said to be {\em closed sets}. For $i\in X$ we denote by
$\bar i$ the smallest closed set containing $i$.
The topology $\tau$ is called $T_0$ if for $i,j\in X$, $i\neq j$ we have that
$i\not\in \bar j$ or $j\not \in \bar i$. Two topologies $\tau,\tau'$ are
called {\em homeomorphic} if there is a $\pi\in S_n$ with
$\tau' = \{ \pi(T) : T \in \tau\}$.  

To a topology $\tau$ on $X$ we associate an $n\times n$-matrix $\sigma^\tau$,
where $\sigma^\tau_{ij}=1$ if $i\in \bar j$, and $\sigma^\tau_{ij}=0$ otherwise.
It is clear that from $\sigma^\tau$ we can reconstruct $\tau$. Furthermore,
a matrix $\sigma$ with coefficients $0,1$ corresponds to a topology if
\begin{equation}\label{top:1}
  \sigma_{ii}=1 \text{ for all } i \text{ and } \sigma_{ij}=1, \sigma_{jk}=1
  \text{ imply }  \sigma_{ik}=1.
\end{equation}  
  Finally, we have that $\tau,\tau'$ are homeomorphic if
and only if there is a $\pi\in S_n$ with $\sigma^{\tau'}_{ij} = \sigma^\tau_
{\pi(i),\pi(j)}$.

The root system $\Phi$ of type $A_{n-1}$ consists of all $\alpha_{ij}$ for
$1\leq i \neq j \leq n$. Here $\alpha_{1,2},\alpha_{2,3},\ldots,\alpha_{n-1,n}$
are the simple roots. If $i<j$ then $\alpha_{ij} = \alpha_{i,i+1}+
\alpha_{i+1,i+2}+\cdots +\alpha_{j-1,j}$. If $i>j$ then $\alpha_{ij} =
-\alpha_{ji}$. This implies that $\alpha_{ij}+\alpha_{kl}\in \Phi$
if $j=k$, when the sum is $\alpha_{il}$, or if $l=i$, when the sum
is $\alpha_{kj}$. The Weyl group of $\Phi$ is isomorphic to $S_n$,
and for $\pi\in S_n$ we have $\pi\cdot \alpha_{ij} = \alpha_{\pi(i),\pi(j)}$.

Let $\tau$ be a topology on $X$, and set $S=\{\alpha_{ij}\in \Phi \mid
\sigma^\tau_{ij}=1\}$. Then by \eqref{top:1} we see that $S$ is closed.
Moreover, $\tau$ is $T_0$ if and only if for each $i,j\in X$, $i\neq j$,
at least one of $\sigma^\tau_{ij}$, $\sigma^{\tau}_{ji}$ is 0. But this is
the same as saying that $S$ is special.

Conversely, let $S\subset \Phi$ be closed. Define the matrix $\sigma$
such that $\sigma_{ii}=1$ for all $i$ and $\sigma_{ij}=1$ if $\alpha_{ij}\in S$,
and $\sigma_{ij}=0$ otherwise. Then $\sigma$ satisfies \eqref{top:1},
and therefore defines a topology on $X$. It is also clear that two topologies
are homeomorphic if and only if the corresponding closed sets are conjugate
under $S_n$.

The conclusion is that topologies on $X$ correspond to closed sets in
$\Phi$, whereas topologies up to homeomorphism correspond to closed
sets up to conjugacy by the Weyl group. In this correspondence the
$T_0$ topologies correspond to special closed sets. Therefore, the tables in
\cite{bm02} imply that the root system of type $A_{15}$ contains 
4483130665195086 special closed sets.

\section{The algorithm}\label{algorithm}

In this section we outline an algorithm for listing the closed sets of
a root system $\Phi$. This algorithm is split into two parts. In the first
part we find all special closed subsets of $\Phi$ up to conjugacy. The second
part is a procedure to list, up to conjugacy, all non-special closed sets whose
special part is a given special closed set obtained by the first half of the
algorithm.

\subsection{Listing the special closed sets}\label{sec:3.1}

We first give a simple algorithm for listing the special closed sets of a
root system up to conjugacy by the Weyl group. Subsequently we describe some
ideas that can be used to implement it reasonably efficiently. 

The algorithm is based on three lemmas, the first of which 
is proved in  \cite{bou}, Chapter VI, Proposition 22,
see \cite{sopkina} for an easier proof.

\begin{lemma}\label{lem:1}
A special closed set is conjugate to a subset of $\Phi^+$.
\end{lemma}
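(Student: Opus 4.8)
The plan is to exhibit an explicit element of $W(\Phi)$ carrying a given special closed set $S$ into $\Phi^+$, by choosing a suitable system of positive roots. Recall that the positive systems of $\Phi$ are exactly the sets $\Phi^+_\gamma = \{\alpha \in \Phi : (\alpha,\gamma) > 0\}$ for $\gamma$ in the interior of a Weyl chamber, and that $W(\Phi)$ acts simply transitively on these. So it suffices to find a vector $\gamma \in \mathrm{E}$, not orthogonal to any root, with $(\alpha,\gamma) > 0$ for all $\alpha \in S$; then $S \subseteq \Phi^+_\gamma$, and applying the Weyl element sending $\Phi^+_\gamma$ to the standard $\Phi^+$ finishes the argument.

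The existence of such a $\gamma$ is where the hypothesis that $S$ is special enters. Consider the cone $C = \{\sum_{\alpha \in S} c_\alpha \alpha : c_\alpha \ge 0\}$ spanned by $S$. I claim $0 \notin C \setminus \{0\}$, i.e.\ no nontrivial nonnegative combination of elements of $S$ vanishes. Indeed, suppose $\sum c_\alpha \alpha = 0$ with all $c_\alpha \ge 0$, not all zero. Using closedness of $S$ together with Lemma~\ref{lem:deccl} (the special part absorbs sums with any element of the set), one shows by grouping terms that such a relation forces some root of $S$ to have its negative also expressible, contradicting $-\alpha \notin S$ for $\alpha \in S$. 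More carefully: pick $\alpha_0 \in S$ with $c_{\alpha_0} > 0$; then $-\alpha_0 = \frac{1}{c_{\alpha_0}}\sum_{\alpha \ne \alpha_0} c_\alpha \alpha$ lies in the cone generated by $S \setminus \{\alpha_0\}$, and an induction on $|S|$ using that sums of elements of $S$ that are roots stay in $S$ (hence in $S^u$) shows $-\alpha_0$ would be forced into $S$. Hence $C$ is a \emph{salient} (pointed) cone. A pointed closed convex cone that does not contain a line has nonempty interior of its dual, so there is a vector $\gamma$ with $(\alpha,\gamma) > 0$ for every $\alpha \in S$; perturbing $\gamma$ slightly within that open half-space condition, we may further assume $(\alpha,\gamma)\ne 0$ for all $\alpha \in \Phi$, since the finitely many root hyperplanes cannot cover the open set. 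This $\gamma$ is regular, so $\Phi^+_\gamma$ is a genuine positive system containing $S$.

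The main obstacle is the claim that the cone on $S$ is pointed; everything else is standard convex geometry plus the transitivity of $W$ on positive systems. The cleanest route to pointedness is probably to avoid the induction sketched above and argue directly: if $\sum_{\alpha\in S} c_\alpha\alpha = 0$ nontrivially with $c_\alpha \ge 0$, scale so the $c_\alpha$ are positive integers, and interpret the relation as saying a multiset of roots from $S$ sums to zero; then a standard argument (repeatedly pairing a root $\beta$ in the multiset with a partial sum to stay inside $\Phi$, using closedness of $S$ to keep partial sums in $S$) produces $-\beta \in S$ for some $\beta$ in the multiset, contradicting speciality. One must be slightly careful that intermediate partial sums are actually roots so that closedness applies; this is handled by choosing the order of summation greedily (always adding a root that keeps the running sum a root, which is possible because the total is $0$, hence any partial sum and its complementary partial sum are negatives of each other and at least one extension step keeps us in $\Phi$). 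Once pointedness is in hand, the proof concludes as above.
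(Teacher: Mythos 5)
Your proposal is correct, and it supplies a self-contained argument where the paper gives none: the authors simply cite Bourbaki (Ch.\ VI, Prop.\ 22) and Sopkina for this lemma, so there is no in-text proof to compare against. Your reduction is the standard one --- find a regular $\gamma$ with $(\alpha,\gamma)>0$ for all $\alpha\in S$, so that $S\subseteq\Phi^+_\gamma$, and use transitivity of $W$ on positive systems --- and by separation this comes down exactly to showing that no nontrivial nonnegative combination of elements of $S$ vanishes. Your first sketch of that step (the induction on $|S|$ after isolating $\alpha_0$) is too vague to assess, but the ``cleanest route'' you then give is sound and is the one to keep; I would only ask you to make the terminal case explicit. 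Concretely: maintain a running sum $\sigma$, which is a root lying in $S$; since the remaining members of the multiset sum to $-\sigma$ and $(\sigma,-\sigma)<0$, some remaining $\beta$ satisfies $(\sigma,\beta)<0$, whence $\sigma+\beta\in\Phi\cup\{0\}$. If $\sigma+\beta$ is a root it lies in $S$ by closedness and the process continues; if $\sigma+\beta=0$ then $\beta=-\sigma$ with both $\sigma$ and $\beta$ in $S$, contradicting that $S$ is special. Since the total sum is $0\notin\Phi$, the second alternative must eventually occur, so the cone on $S$ is pointed. The remaining convex-geometry steps (nonempty interior of the dual of a salient finitely generated cone, and perturbing $\gamma$ off the finitely many root hyperplanes) are all correct. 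This is close in spirit to Sopkina's ``sum of roots'' proof, and is a perfectly acceptable elementary replacement for the Bourbaki citation.
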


For a closed set $T$ we let $T+T$ be the set of all $\gamma\in T$ such that
$\gamma=\alpha+\beta$ for certain $\alpha,\beta\in T$.
Let $\Phi^+$ be a fixed subset of positive roots with set of simple roots
$\Delta$. If $\alpha = \sum_{\alpha\in \Delta} k_\alpha \alpha$, then
$\sum_{\alpha\in\Delta} k_\alpha$ is called the {\em height} of $\alpha$.
Write $W=W(\Phi)$, the Weyl group of $\Phi$.

\begin{lemma}\label{lem:2}
Let $T$ be a special closed set. Then $T+T$ is strictly contained in $T$.
\end{lemma}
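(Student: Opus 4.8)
The plan is to show that the special closed set $T$ always contains a root that cannot be written as a sum of two roots of $T$, i.e.\ a root of $T \setminus (T+T)$; this gives $T+T \subsetneq T$. First I would observe that it suffices to handle the case $T \subseteq \Phi^+$: by Lemma \ref{lem:1}, $T$ is conjugate to a subset of $\Phi^+$, and the property ``$T+T$ is strictly contained in $T$'' is visibly invariant under the Weyl group action, since $w(T+T) = w(T)+w(T)$. So assume $T \subseteq \Phi^+$.

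Now the key idea is to use the height function. Since $T$ is finite and nonempty (the empty case is trivial, as $\emptyset + \emptyset = \emptyset$ is not strictly contained in $\emptyset$, so one should note the convention that ``special closed set'' here is implicitly nonempty, or simply observe that $T = \emptyset$ makes the statement vacuous in context), pick $\gamma \in T$ of minimal height. I claim $\gamma \notin T+T$: if $\gamma = \alpha + \beta$ with $\alpha,\beta \in T \subseteq \Phi^+$, then both $\alpha$ and $\beta$ have strictly smaller height than $\gamma$ (each has height at least $1$, being a positive root, so $\mathrm{ht}(\gamma) = \mathrm{ht}(\alpha) + \mathrm{ht}(\beta) > \mathrm{ht}(\alpha)$), contradicting minimality of the height of $\gamma$. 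Hence $\gamma \in T \setminus (T+T)$, and since $T+T \subseteq T$ by definition, this inclusion is strict.

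I expect the only real subtlety — and it is minor — to be the treatment of the degenerate case $T = \emptyset$ and the precise reading of the statement's conventions; in the main argument the work is genuinely routine once one invokes Lemma \ref{lem:1} to pass to $\Phi^+$ and then exploits positivity of the height on $\Phi^+$. No deeper structural input about the root system is needed beyond the fact that positive roots have positive (integer) height and that height is additive.
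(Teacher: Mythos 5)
Your proof is correct and follows exactly the paper's argument: reduce to $T\subseteq\Phi^+$ via Lemma \ref{lem:1}, then take a root of minimal height in $T$ and note it cannot be a sum of two positive roots of $T$ by additivity and positivity of the height. The extra remark on the empty set is a harmless clarification of convention.
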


\begin{proof}
  Suppose that $T\subset \Phi^+$ and let $\alpha\in T$ be of minimal height.
  Then $\alpha$ cannot be a sum of two elements of $T$. The general case
  follows by Lemma \ref{lem:1}.
\end{proof}

For a closed set $T$ we denote its conjugacy class by $[T]$, that is,
$$[T] = \{ w(T) \mid w\in W\}.$$
Let $T$ be
a special closed set. By Lemma \ref{lem:2} there is an $\alpha \in T\setminus
(T+T)$. Set $T' = T\setminus \{\alpha\}$. Then $T'$ is closed and we say that
$[T']$ is a {\em successor} of $[T]$. Note that the set of successors of $[T]$
depends on the class $[T]$ and not on the choice of $T$. Indeed, let $S\in [T]$,
then there is a $w\in W$ with $w(T)=S$. So $\beta =w(\alpha) \in S\setminus
(S+S)$ and with $S'= S\setminus \{\beta\}$ we have $S' = w(T')$ and
therefore $[S']=[T']$.

\begin{lemma}\label{lem:3}
  Let $T'$ be a special closed set of cardinality $n<|\Phi^+|$.
  Then there is a
  special closed set $T$ of cardinality $n+1$ such that $[T']$ is a
  successor of $[T]$.
\end{lemma}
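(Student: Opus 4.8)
The plan is to reduce to the case $T'\subseteq\Phi^+$ via Lemma~\ref{lem:1}, and then to obtain $T$ as a cardinality-minimal closed set lying strictly between $T'$ and $\Phi^+$.

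First I would note that $\Phi^+$ is itself closed (the sum of two positive roots, when it is a root, is again positive) and special, so since $|T'|<|\Phi^+|$ the family $\mathcal{F}$ of closed sets $S$ with $T'\subsetneq S\subseteq\Phi^+$ contains $\Phi^+$ and hence is nonempty. Pick $T\in\mathcal{F}$ of minimal cardinality. I then claim that $|T|=n+1$ and that the unique root $\alpha\in T\setminus T'$ satisfies $\alpha\notin T+T$; the latter is precisely the condition for $[T']$, with $T'=T\setminus\{\alpha\}$, to be a successor of $[T]$.

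To prove the claim, let $\alpha\in T\setminus T'$ have minimal height among the elements of $T\setminus T'$. The heart of the argument is that $\alpha\notin T+T$: if $\alpha=\beta+\gamma$ with $\beta,\gamma\in T$, then $\beta$ and $\gamma$ are positive roots of height strictly smaller than that of $\alpha$, so by minimality of the height of $\alpha$ in $T\setminus T'$ both $\beta$ and $\gamma$ lie in $T'$; but then closedness of $T'$ forces $\alpha=\beta+\gamma\in T'$, a contradiction. Granting $\alpha\notin T+T$, the set $T\setminus\{\alpha\}$ is again closed (the same elementary observation already used in the definition of successor), it still contains $T'$ since $\alpha\notin T'$, and it is strictly smaller than $T$; minimality of $|T|$ then forces $T\setminus\{\alpha\}=T'$, so $T=T'\cup\{\alpha\}$ has cardinality $n+1$. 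Finally, $T\subseteq\Phi^+$ makes $T$ special, and since $\alpha\notin T+T$ and $T'=T\setminus\{\alpha\}$, the class $[T']$ is a successor of $[T]$ by definition. Passing back from the $W$-conjugate of $T'$ inside $\Phi^+$ to $T'$ itself does not affect anything, as all statements are up to $W$-conjugacy.

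I expect the only real point to be the height computation showing that the least-height new root is indecomposable inside $T$. In particular one should resist the temptation to adjoin to $T'$ an arbitrary positive root of least height outside $T'$: that need not produce a closed set, and it is the minimality of $T$ within $\Phi^+$ (rather than a minimality property of $\alpha$ alone) that makes the construction go through.
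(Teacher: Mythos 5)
Your proof is correct, and it takes a genuinely different route from the paper's. The paper works ``bottom up'': it forms the normalizer $\mathcal{N}(T')=\{\alpha\in\Phi \mid \alpha+\beta\in T' \text{ whenever } \beta\in T',\ \alpha+\beta\in\Phi\}$, passes to the Lie algebra $\mathfrak{n}^+$ spanned by the positive root vectors, and uses the fact that the subalgebra $\mathfrak{a}$ corresponding to $T'$ is properly contained in its normalizer in $\mathfrak{n}^+$ (otherwise $\mathfrak{a}$ would be a Cartan subalgebra of the nilpotent algebra $\mathfrak{n}^+$) to produce a single root $\gamma\in\Phi^+\setminus T'$ with $T'\cup\{\gamma\}$ closed; this has the side benefit of identifying the full set of adjoinable roots, which the algorithm actually exploits in Step (2c). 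You instead work ``top down'': starting from the fact that $\Phi^+$ itself is a closed set containing $T'$, you take a cardinality-minimal closed $T$ with $T'\subsetneq T\subseteq\Phi^+$, and show via the minimal-height argument (essentially the same computation as in the proof of Lemma \ref{lem:2}) that a least-height element $\alpha$ of $T\setminus T'$ is not in $T+T$, so that $T\setminus\{\alpha\}$ is closed and, by minimality, equals $T'$. Your version is purely combinatorial and avoids the Lie-algebra detour; the paper's version is slightly more informative for the implementation. One small expository remark: you state the claim as being about ``the unique root $\alpha\in T\setminus T'$'' before uniqueness has been established, but the argument as written correctly derives uniqueness at the end from the minimality of $|T|$, so there is no logical gap.
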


\begin{proof}
  By Lemma \ref{lem:1} we may assume that $T'\subset \Phi^+$. Set
  $$\mathcal{N}(T') = \{ \alpha\in \Phi \mid \alpha+\beta\in T' \text{ for all }
  \beta\in T' \text{ such that } \alpha+\beta\in \Phi\},$$
  which we call the {\em normalizer} of $T'$. We claim that there is a $\gamma
  \in \Phi^+$ with $\gamma\not\in T'$, $\gamma\in \mathcal{N}(T')$. 
  Let $\g$ be the semisimple Lie algebra over $\mathbb{C}$ with Cartan
  subalgebra $\mathfrak{h}$ such that the corresponding root system is
  isomorphic to $\Phi$. In the sequel we identify this root system and $\Phi$.
  Let $\nm^+$ be the subalgebra of $\g$ spanned by the root
  vectors corresponding to the roots in $\Phi^+$. Let $\a$ be the subspace
  of $\nm^+$ spanned by the root vectors corresponding to the roots in $T'$.
  Since $T'$ is closed, $\a$ is a subalgebra. Because $\a$ is spanned by root
  vectors we have $[\h,\a]\subset \a$. 
  Consider the normalizer of
  $\a$ in $\nm^+$, $\mathfrak{n}(\a) = \{ x \in \nm^+ \mid [x,\a]\subset \a\}$. 
  Let $x\in \mathfrak{n}(\a)$ and $y\in \a$,
  $h\in \mathfrak{h}$, then $[[h,x],y] = -[[x,y],h]-[[y,h],x]$. Now
  $[x,y]\in\a$ and hence $[[x,y],h]\in \a$. Secondly, $[y,h]\in \a$ so that
  $[[y,h],x]\in \a$ as well. We conclude 
  that $[h,x]\in \mathfrak{n}(\a)$. It follows that $\mathfrak{n}(\a)$ is
  a subalgebra of $\nm^+$, normalized by $\h$. Therefore, $\mathfrak{n}(\a)$ is
  also spanned by root vectors. If $\mathfrak{n}(\a)=\a$ then $\a$ would be
  a Cartan subalgebra of $\nm^+$. But the only Cartan subalgebra of $\nm^+$ is
  $\nm^+$ itself. We conclude that $\mathfrak{n}(\a)$ contains a root vector
  corresponding to a $\gamma\in \Phi^+$, not lying in $\a$. Then $\gamma\not\in
  T'$, $\gamma\in \mathcal{N}(T')$.

  Set $T = T'\cup \{\gamma\}$. Then $T$ is closed and
  $\gamma \in T\setminus (T+T)$. The lemma
  follows.
\end{proof}

\begin{remark}\label{rem:conjclos}
Let $T$ be a special closed set and $W_T = \{ w\in W \mid w(T) = T\}$, the
stabilizer of $T$ in $W$. Let $\alpha,\beta\in T\setminus (T+T)$ be such that
$\beta = w(\alpha)$, for a certain $w\in W_T$. Then $w(T\setminus \{\alpha\})
= T\setminus \{\beta\}$.
\end{remark}

This, together with the previous lemma, implies the correctness of the
following algorithm. It takes as input
a set $M$ of representatives of the conjugacy classes of special closed sets
of cardinality $n+1$. It outputs a set of representatives of the conjugacy
classes of special closed sets of cardinality $n$. The basic idea is to
compute the successors of each element of $M$ and weed out the conjugate
copies. Remark \ref{rem:conjclos} helps with the latter task.
The algorithm takes the following steps:

\begin{enumerate}
\item Set $L=\emptyset$.  
\item For each $T$ in $M$ do
  \begin{enumerate}
  \item Compute $W_T$.
  \item Compute the orbits of $W_T$ on $T\setminus (T+T)$.
  \item For each such orbit take a representative $\alpha$ and set $S=T\setminus
    \{\alpha\}$. If $S$ is not conjugate to any element of $L$, then add $S$
    to $L$.
  \end{enumerate}
\item Return $L$.
\end{enumerate}

Write $m=|\Phi^+|$. 
Now starting with the set $M$ containing the one special closed subset of
$\Phi^+$ of maximal cardinality, that is, $\Phi^+$ itself, we iterate this
algorithm and obtain the special closed sets (up to conjugacy) of sizes
$m-1,m-2,\ldots, 1$. So in the end we obtain, by Lemma \ref{lem:1}, all
special closed sets of $\Phi$ up to conjugacy.

\begin{remark}
We give some technical details of the implementation.  
The algorithm has been implemented in the language of the computer algebra
system {\sf GAP}4 \cite{GAP4}, as part of the package {\sf SLA} \cite{degraafb}.

In our implementation we use a list $\alpha_1,\ldots,\alpha_N$
of all the roots of the root system. A root is represented by its position
in this list. So instead of $\alpha_i$ we just use $i$. This gives a memory
efficient way to represent a closed set as a list of indices.
The addition relations of the roots
are stored in a table. This is a 2-dimensional table $\mathcal{T}$ such that
if $\mathcal{T}(i,j)=k$, then $\alpha_i+\alpha_j=\alpha_k$ if $k\neq 0$, and
$\alpha_i+\alpha_j$ is not a root if $k=0$.

The Weyl group is given as a
permutation group on $\{1,\ldots,N\}$. This allows us to use the functionality
for permutation groups present in {\sf GAP}4. Thus we can readily
compute generators of $W_T$ (the command for this is
{\tt Stabilizer( W, T, OnSets )}) and the orbits of $W_T$ on
$T\setminus (T+T)$ (the command here is {\tt Orbits( H, D )}, where
{\tt H}=$W_T$ and {\tt D }= $T\setminus (T+T)$).

For deciding whether two sets are conjugate under the Weyl group 
backtrack methods (see \cite{seress}, Chapter 9) are used. They are present
in the {\sf GAP} core system, the command for checking whether $S_1$,
$S_2$ are conjugate is 
{\tt RepresentativeAction( W, S1, S2, OnSets )}. 
\end{remark}

In order to make the implementation more efficient we make use of certain
invariants of closed sets. An invariant of closed sets is a function
$\chi : \{ \text{closed sets} \} \to A$, where $A$ is some set, such that
$S=w(T)$ for a $w\in W$ implies that $\chi(S)=\chi(T)$. First we describe
three invariants.
In order to describe the first two of these we need some well known facts
on weights (see \cite{humphreys}, Chapter 13), which we recall first.

Let $\Delta = \{ \alpha_1,\ldots,\alpha_\ell\}$ be a fixed set of simple roots
of $\Phi$. Define the elements $\lambda_i$ of the real space $E$ spanned by
$\Phi$ by $\langle \lambda_i, \alpha_j^\vee\rangle=\delta_{i,j}$. These
$\lambda_i$ are called the fundamental weights. A weight is a $\Z$-linear
combination of the fundamental weights. A weight $\lambda = m_1\lambda_1+\cdots
+m_\ell \lambda_\ell$ is {\em dominant} if $m_i\geq 0$ for all $i$. The Weyl group
$W$ acts on the set of weights, and each weight $\lambda$ is $W$-conjugate to a
unique dominant weight (\cite{humphreys}, Lemma 13.2.A). This dominant weight
is computed as follows. Set $\mu_1 = \lambda$. Let $i\geq 1$ and suppose that
$\mu_i$ is defined. Write $\mu_i = n_1\lambda_1 +\cdots +n_\ell \lambda_\ell$.
If $n_j\geq 0$ for all $j$ then $\mu_i$ is the dominant weight that we
are after. Otherwise there is a $j$ with $n_j <0$ and we set $\mu_{i+1} =
s_{\alpha_j}(\mu_i)$. It is not difficult to show that this procedure
terminates and returns the unique dominant weight conjugate to $\lambda$
(see, for example, \cite{degraaf0}, p. 319).
Note that by recording the $s_{\alpha_j}$ that are used we also find a
$w\in W$ such that $w(\lambda)$ is dominant.

Now let $S\subset \Phi$ then its sum vector is $\xi(S) = \sum_{\alpha\in S} \alpha$.
The sum vectors of conjugate sets are obviously conjugate, but in general not
the same. In order to get an invariant we compute the unique dominant
weight that is conjugate to $\xi(S)$. We denote the result by $\sigma(S)$. Then
$\sigma$ is an invariant of closed sets.

For a second invariant let $r$ be a rational number and 
consider the set $S^{(r)} = \{ \alpha\in \Phi \mid
(\alpha,\beta) \geq r \text{ for all } \beta\in S\}$. If $T=w(S)$ for a $w\in W$
then also $T^{(r)} = w(S^{(r)})$ because the Weyl group leaves the inner product
invariant. Define $\delta^r(S) = \sigma( S^r )$; then $\delta^r$ is an invariant
of closed sets. In our implementation we normalize the inner product so that
$(\alpha,\alpha)=2$ for short roots $\alpha$ and we use the invariant
$\delta^{-1}$. We also considered other values of $r$, but $r=-1$ turned out to
work best.

Let $T=\{\alpha_{i_1},\ldots,\alpha_{i_m}\}$ be a closed set. The matrix of its
inner products is $M_T = ((\alpha_{i_k},\alpha_{i_l}))_{1\leq k,l\leq m}$, and let
$\widehat{M}_T$ be the matrix consisting of the sorted rows of $M_T$, which
are listed in lexicographical order.
Since the Weyl group leaves the inner product invariant, if $T$, $S$ are
conjugate, then $M_T$ can be obtained from $M_S$ by applying a permutation
to both the rows and the columns. So in that case $\widehat{M}_T =
\widehat{M}_S$. Hence we have a third invariant.

We have three applications of these invariants: to make the computation of
the stabilizer $W_T$ more efficient, to reduce the number of sets $S$ that we
attempt to insert in the list $L$ in Step (2c), and reduce the number of
conjugacy tests that we perform in Step (2c) (that is, the number of calls
to {\tt RepresentativeAction}).

{\bf The computation of the stabilizer.} We can use
the invariant $\sigma(T)$ to compute a subgroup $W'\subset W$ such that the
$W'_T=W_T$, making the computation of the stabilizer more efficient. For this
we consider the sum vector $\xi(T)$. We compute a $w\in W$ such that
$w(\xi(T))$ is dominant. Let $\lambda_1,\ldots,\lambda_\ell$ denote the
fundamental weights as above and write $w(\xi(T)) = m_1\lambda_1+\cdots +m_\ell
\lambda_\ell$. Then the stabilizer $W_{w(\xi(T))}$ is generated by the
$s_{\alpha_i}$ where $i$ is such that
$m_i=0$ (\cite{humphreys}, Lemma 10.3B). Obviously $W_{\xi(T)} = w^{-1}
W_{w(\xi(T))}w$. We see that it is straightforward to compute the group
$W_{\xi(T)}$ and we can let this be $W'$. Furthermore, in practice it is
useful to check whether the generators of $W'$ leave $T$ invariant.
If this is the case then we have the stabilizer without any further
computation.

{\bf Reducing the number of sets $S$ in Step (2c).}
Note that the algorithm starts with the closed set $\Phi^+$ and the other closed
sets are constructed as subsets of that. Hence each closed set produced by
the algorithm is contained in $\Phi^+$.

In Step (2c) of the algorithm different non-conjugate sets $T$ of size $n+1$
can produce
conjugate successors $S$ of size $n$ (and usually this happens rather often).
In order to reduce the number of closed sets $S$ that go into the second part
of the step (that is, the conjugacy testing) we first compute all sets
$T'\subset \Phi^+$ having $S$ as a successor,
and only continue with $S$ if $\sigma(T)$
is minimal among all $\sigma(T')$. (For this we can use any order on the
weights, for example the lexicographical order on coordinate vectors.)
If it is not, then we are certain that a set conjugate to $S$ will be
constructed as successor of another set of size $n+1$. More precisely
we replace Step (2c) by the following

\begin{itemize}
\item[(2c)] For each such orbit take a representative $\alpha$ and set
  $S=T\setminus\{\alpha\}$.
  \begin{enumerate}[{\normalfont (i)}]
  \item Compute $\mathcal{N}^+(S) = \{\beta\in \Phi^+ \mid \beta\not\in S,
  S\cup \{\beta\} \text{ is closed } \}$.
\item For each $\beta\in \mathcal{N}^+(S)$ set $T' = S\cup \{\beta\}$ and
  compute $\sigma(T')$.
\item If $\sigma(T)$ is minimal among all $\sigma(T')$ then check whether
  $S$ is not conjugate to any element of $L$. If it is not then add $S$
  to $L$.
  \end{enumerate}
\end{itemize}

{\bf Reduce the number of conjugacy tests.}
Next we come to the conjugacy testing. 
In order to limit the necessary tests
we use the invariants $\widehat{M}_S$ and $\delta^{-1}(S)$. Only if a set $S'$
in the list $L$ has $\widehat{M}_S = \widehat{M}_{S'}$ and $\delta^{-1}(S) =
\delta^{-1}(S')$ we do a conjugacy test, otherwise we already know that $S,S'$
are not conjugate. Often no $S'$ in $L$ has $\widehat{M}_S = \widehat{M}_{S'}$
and $\delta^{-1}(S) =\delta^{-1}(S')$ and in those cases $S$ can directly
be added to $L$. This explains the relatively low number of conjugacy tests
in the sixth column of Table \ref{table:performance}.

\begin{remark}
\begin{itemize}  
\item  One other powerful invariant is the permanent of the
  matrix $M_T$. Experiments show that it is very effective. However, it is
  difficult
  to compute the permanent of a matrix, and the time needed to compute the
  permanents outweighs the gains made from using this invariant.
\item Of course it is possible to use the invariants $\delta^{-1}$,
  $\widehat{M}_T$ also for reducing the number of sets $S$ in Step (2c)(ii).
  In fact, experiments show that these manage to reduce the number of sets
  even further. However, the number of sets $T'$ for which these invariants
  have to be computed can be very large (see the fourth column of
  Tabe \ref{table:performance}). Therefore
  the invariant $\sigma$ works better because it is much easier to compute.
  Similarly, $\sigma$ could also be used to limit the number of conjugacy tests.
  But here the extra time needed to compute the invariants $\widehat{M}_T$
  and $\delta^{-1}$ pays off: these invariants limit the number of conjugacy
  tests more effectively.
\item Many alternatives to our algorithm are possible. Here we briefly go
  into two of them.
  Instead of our approach to conjugacy testing it is possible 
  compute a minimal image of each closed set; that is, a closed set $T$
  is replaced by the minimal element of its orbit (in some ordering, for
  example the the lexicographical). Then the conjugacy problem boils down to
  testing equality. In \cite{jjpw1} algorithms for computing minimal
  images are given which
  have been implemented in the {\sf GAP}4 package {\sf images} (\cite{jjpw2}).
  However, it appears that computing a minimal image of each closed set in the
  output of our algorithm takes longer than listing the closed sets in the
  first place (for example, for the root system of type $E_6$ we find
  94635 special closed sets which are listed in 149 seconds, whereas the
  computation of a minimal image of each of the 94635 closed sets took
  443 seconds).
  Another more general purpose algorithm that can be used for our problem is
  described in \cite{bet}. The rough structure of this algorithm is similar
  to ours. It starts with two sets $\mathcal{A}$ and $\mathcal{B}$ (in our
  case these are the special closed sets of sizes $n+1$ and $n$ respectively)
  on which a group $G$ acts (in our case the Weyl group). Also given is a
  relation $\mathcal{R}\subset \mathcal{A}\times \mathcal{B}$ (in our case
  this is the successor relation). Knowing the $G$-orbits in $\mathcal{A}$
  the algorithm computes the $G$-orbits in $\mathcal{B}$. We have not
  implemented this algorithm, but it seems to be hard to combine it 
  with the invariants that we use and which are very succesful at making our
  algorithm more efficient. Probably it would be possible to modify the
  algorithm in \cite{bet} to also use those invariants, but then the result
  would be a method very similar to ours.
\item There always is the obvious question as to the reliability of our
  implementations. In other words: how can we be sure that output of our
  computations is correct. Of course we can never be absolutely sure.
  However, we do have some circumstantial evidence that our implementations
  are correct. Firstly, in the course of our work we also have tried
  completely different algorithms, and the output has always been the same.
  Secondly, for the root systems of type $A_n$ the number of special
  closed sets that we get is confirmed by the data in \cite{bm02}.
\end{itemize}  
\end{remark}

\subsection{Practical performance}

Here we present some data on the practical performance of our algorithm.
In Table \ref{table:performance} we list the running times of various parts
of the algorithm. As test root systems we have taken $F_4$, $E_6$, $B_6$,
$B_7$. The second column of this table has the total time needed to compute
the stabilizers $W_T$. The third column lists the time taken to compute
the orbits of $W_T$ on $T\setminus (T+T)$. The fourth column has the time taken
to compute $\sigma(T)$ for the various $T$ (to limit the number of sets $S$
that we try to insert in the list $L$). The fifth column displays the total
time taken to compute the invariants $\widehat{M}_T$, $\delta^{-1}$. The
sixth column has the total time taken to do the conjugacy tests (that is,
the time taken in calls to {\tt RepresentativeAction}). The integer
following the running times in these columns is the number of times the
algorithm entered the computation (so, for example, for $B_6$ there have been
1273371 computations of $\sigma(T)$). The seventh and eighth colums show,
respectively, the number of closed sets in the output and the total time
taken. The last column has the number of miliseconds taken for each closed set
in the output (this is the quotient of the entries in the eighth and seventh
columns).

We list a few obvious observations:

\begin{itemize}
\item The parts contributing most to the running time are the computation of
  the stabilizers (which is done for almost every final closed set), and the
  computation of the invariant $\sigma$ (which is done a huge number of times).
\item The conjugacy test is not perfomed very often (compared to the number
  of closed sets in the output).
\item The time taken per closed set increases markedly with increasing rank.  
\end{itemize}

\begin{footnotesize}
\renewcommand{\arraystretch}{1.4}
\begin{table}[h!]
\begin{tabular}{|c|c|c|c|c|c|c|c|c|}
 \hline 
 \rowcolor{Gray}

 {\bf type}  & {\bf Stab } & {\bf Orb }  & {\bf Invariant} &
 {\bf Invariants  } & {\bf Conjugacy} & {\bf Number of} & {\bf Time} &
 {\bf ms per}\\
  \rowcolor{Gray}
  & & & $\sigma$ & $\widehat{M}_T$, $\delta^{-1}$ & & {\bf closed sets} & &
           {\bf closed set}\\
 \hline 
$F_4$ & 0.9 & 0.3 & 0.4 & 0.4 & 0.12 & 3579 & 2.2 & 0.61\\
 & 3577 & 3577 & 19889 & 3954 & 410 & & & \\
 \hline
$E_6$ & 56.7 & 9.6 & 36.6 & 24.2 & 17.8 & 94635 & 149 & 1.57\\ 
 & 94634 & 94634 & 828924 & 115652 & 30925 & & & \\
 \hline
 $B_6$ & 90.2 & 16.9 & 50.5 & 23.5 & 13.4 & 151386 & 201 & 1.33\\
 & 151384 & 151384 & 1273371 & 174243 & 27234 & & & \\
 \hline
  $B_7$ & 5009 & 659 & 4656 & 1506 & 1414 & 6115473 & 13803 & 2.26\\
 & 6115471 & 6115471 & 69416764 & 7223257 & 1718047 & & & \\
 \hline
\end{tabular}
\caption{Running times (in seconds)
  of various parts of the algorithm. See the text for
  a description of the contents of the columns.}\label{table:performance}
\end{table}
\end{footnotesize}

\subsection{Listing the non-special closed sets}

Now we turn to the second part of the procedure, that is, to finding the
non-special closed sets, given the special ones. We will outline an algorithm
for obtaining the closed sets $Q$ such that $Q^u=T$, where $T$ is a given
special closed set. Since we have the list of all special closed sets, by
Lemma \ref{lem:deccl} this
suffices to get all closed sets. We start with the following simple
lemma.

\begin{lemma}\label{lem:4}
  Let $T$ be a special closed set.
  Let $\alpha \in \Phi\setminus (T\cup -T)$. There is a closed set $S$ such that
  $S^u=T$ and $\alpha,-\alpha \in S^r$ if and only if 
  $T\cup \{\alpha,-\alpha\}$ is closed. 
\end{lemma}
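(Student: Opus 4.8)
The plan is to establish both implications by focusing on the one natural candidate, namely $S_0 = T \cup \{\alpha,-\alpha\}$. Starting with the ``if'' direction, suppose $T \cup \{\alpha,-\alpha\}$ is closed; I would simply take $S = S_0$ and check that it has the required decomposition. Since $\alpha \in \Phi\setminus(T\cup -T)$, neither $\alpha$ nor $-\alpha$ lies in $T$, and since $T$ is special no $\gamma\in T$ satisfies $-\gamma\in T$. Hence for $\gamma\in S_0$ we have $-\gamma\in S_0$ precisely when $\gamma\in\{\alpha,-\alpha\}$, so $S_0^r = \{\alpha,-\alpha\}$ and $S_0^u = S_0\setminus S_0^r = T$. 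Thus $S_0$ witnesses the claim.

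For the ``only if'' direction, let $S$ be a closed set with $S^u = T$ and $\alpha,-\alpha\in S^r$; I would show directly that $S_0 = T\cup\{\alpha,-\alpha\}$ is closed by a short case check on a root $\beta+\gamma$ with $\beta,\gamma\in S_0$. If $\beta,\gamma\in T$ the sum lies in $T$ because $T$ is closed. If one of $\beta,\gamma$ equals $\pm\alpha$ and the other lies in $T$, then both belong to $S$, the one in $T=S^u$ is a special root of $S$, and Lemma~\ref{lem:deccl} applied to $S$ forces $\beta+\gamma\in S^u = T$. In the remaining cases $\{\beta,\gamma\}$ is $\{\alpha\}$, $\{-\alpha\}$, or $\{\alpha,-\alpha\}$, giving $\beta+\gamma\in\{2\alpha,-2\alpha,0\}$, none of which is a root by (R2), so there is nothing to verify. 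Hence $S_0$ is closed, as required.

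I do not anticipate a real obstacle: the argument is entirely formal. The two points that need care are (i) invoking Lemma~\ref{lem:deccl} with the roles assigned correctly --- the ambient closed set there must be $S$, not $T$, so that its special part is $T$ --- and (ii) using the hypothesis $\alpha\notin T\cup -T$ to guarantee that the symmetric part of $S_0$ is exactly $\{\alpha,-\alpha\}$ and not something strictly larger, which is what makes $S_0^u$ equal to $T$ on the nose. I would present the ``if'' direction first, since it isolates the canonical set $S_0$ that the ``only if'' direction then has to prove closed.
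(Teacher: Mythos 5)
Your proof is correct and follows the same route as the paper: the paper's proof is exactly "take $S=T\cup\{\alpha,-\alpha\}$ for the converse" and "Lemma \ref{lem:deccl} shows closure for the forward direction," and your write-up just fills in the details of both steps (the case analysis for closure via Lemma \ref{lem:deccl} applied to $S$, and the verification that $S_0^r=\{\alpha,-\alpha\}$ using that $T$ is special and $\alpha\notin T\cup -T$).
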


\begin{proof}
  If there is such an $S$, then Lemma \ref{lem:deccl} shows that 
  $T\cup \{\alpha,-\alpha\}$ is closed. For the converse we can take
  $S=T\cup \{\alpha,-\alpha\}$.
\end{proof}

The next lemma is similar to \cite{sopkina}, Proposition 5.

\begin{lemma}\label{lem:5}
  Let $T$ be a special closed set.
  Set $S = \{ \alpha \in \Phi\setminus (T\cup -T)\mid 
  T\cup \{\alpha,-\alpha\} \text{ is closed}\}$. Then 
  $R= S \cup T$ is a closed set with $R^r= S$, $R^u=T$.
\end{lemma}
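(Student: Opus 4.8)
The plan is to verify directly from the definitions that $R = S \cup T$ satisfies all three assertions: that it is closed, that $R^r = S$, and that $R^u = T$. Throughout I will use Lemma~\ref{lem:deccl} and Lemma~\ref{lem:4} freely.

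First I would settle $R^r = S$ and $R^u = T$, since these are largely bookkeeping. Every $\alpha \in S$ has $-\alpha \in S$ by the symmetric definition of $S$ (if $T\cup\{\alpha,-\alpha\}$ is closed, the condition on $-\alpha$ is the same), so $S \subseteq R^r$. Conversely, elements of $T$ are special: since $T$ is a special closed set, no $\alpha\in T$ has $-\alpha\in T$, and moreover $S$ was defined to avoid $T\cup -T$ entirely, so no $\alpha\in T$ has $-\alpha\in S$ either; hence $T \cap R^r = \emptyset$. Combined with $R = S\sqcup T$ (disjointness because $S$ avoids $T$), this gives $R^r = S$ and therefore $R^u = R\setminus R^r = T$.

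The substantive step is showing $R$ is closed, i.e.\ that for $\gamma,\delta\in R$ with $\gamma+\delta\in\Phi$ we have $\gamma+\delta\in R$. I would split into three cases according to which of $S,T$ the two roots lie in. If $\gamma,\delta\in T$, closedness of $T$ gives $\gamma+\delta\in T\subseteq R$. If $\gamma\in S$ and $\delta\in T$ (or vice versa): by definition of $S$, $T\cup\{\gamma,-\gamma\}$ is closed, so $\gamma+\delta\in T\cup\{\gamma,-\gamma\}$; since $\gamma\ne \gamma+\delta$ and $-\gamma\notin T$ while also $-\gamma \neq \gamma + \delta$ would need checking (if $\gamma + \delta = -\gamma$ then $\delta = -2\gamma\notin\Phi$ by (R2), contradiction), we conclude $\gamma+\delta\in T\subseteq R$. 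The genuinely delicate case is $\gamma,\delta\in S$. Here I want to show $\gamma+\delta\in R = S\cup T$. Since $\gamma+\delta\in\Phi$, either $\gamma+\delta\in T\cup -T$, in which case I claim it must lie in $T$ (if $\gamma+\delta=-\varepsilon$ for $\varepsilon\in T$, then $-\gamma,-\delta\in S$ and $-\gamma+(-\delta)=\varepsilon$... one must rule out landing in $-T$; this follows because $T$ is special and closed — if both $\varepsilon$ and $-\varepsilon$ arose this way one derives $\varepsilon\in T\cap -T$), or $\gamma+\delta\notin T\cup -T$, and then I must verify $T\cup\{\gamma+\delta, -(\gamma+\delta)\}$ is closed so that $\gamma+\delta\in S$.

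The main obstacle is exactly this last sub-case: proving that if $T\cup\{\gamma,-\gamma\}$ and $T\cup\{\delta,-\delta\}$ are each closed and $\gamma+\delta\in\Phi\setminus(T\cup -T)$, then $T\cup\{\gamma+\delta,-(\gamma+\delta)\}$ is closed. To check closedness of this set one must examine sums $\beta + (\gamma+\delta)$ for $\beta\in T$, and sums within $\{\gamma+\delta,-(\gamma+\delta)\}$ (these give $0\notin\Phi$, fine). For $\beta\in T$ with $\beta+\gamma+\delta\in\Phi$, I would try to write this as $(\beta+\gamma)+\delta$ or $\beta+(\gamma+\delta)$ and repeatedly invoke the closedness of $T\cup\{\gamma,-\gamma\}$ and $T\cup\{\delta,-\delta\}$, together with Lemma~\ref{lem:deccl} applied to the special closed set $T$ — tracking carefully which intermediate sums are roots. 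This is a finite but fiddly case analysis on root strings; I expect it parallels the proof of \cite{sopkina}, Proposition~5, referenced just before the lemma, and the cleanest route may be to quote or adapt that argument rather than redo it from scratch.
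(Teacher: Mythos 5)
Your bookkeeping for $R^r=S$, $R^u=T$, and for the cases $\gamma,\delta\in T$ and $\gamma\in S,\ \delta\in T$ is correct and matches what the paper needs. But the heart of the lemma is exactly the sub-case you flag as ``the main obstacle'' and then leave unresolved: given $\gamma,\delta\in S$ with $\gamma+\delta\in\Phi$, you must show that $T\cup\{\gamma+\delta,-\gamma-\delta\}$ is closed, and your proposal only says you ``would try to write $\beta+\gamma+\delta$ as $(\beta+\gamma)+\delta$'' and that the case analysis is ``fiddly,'' deferring to \cite{sopkina}. That is not a proof; re-associating the sum is only legitimate if you first know that $\beta+\gamma$ (or $\beta+\delta$) is itself a root, and nothing in your write-up establishes this. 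The missing idea, which is the one substantive ingredient of the paper's argument, is the following fact about root systems: if $\beta,\gamma,\delta\in\Phi$ and $\beta+\gamma+\delta\in\Phi$, then at least two of $\beta+\gamma$, $\beta+\delta$, $\gamma+\delta$ are roots (proved via the inner-product criterion in the simply-laced case and by inspection of the rank~$\le 3$ subsystem spanned by $\beta,\gamma,\delta$ in general). Since $\gamma+\delta$ is already assumed to be a root, this guarantees that one of $\beta+\gamma$, $\beta+\delta$ is a root; say $\beta+\gamma\in\Phi$. Then $\beta+\gamma\in T$ because $T\cup\{\gamma,-\gamma\}$ is closed (and the sum cannot be $\pm\gamma$), and then $\beta+\gamma+\delta=(\beta+\gamma)+\delta\in T$ because $T\cup\{\delta,-\delta\}$ is closed. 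The same argument with $-\gamma,-\delta$ handles $\beta-\gamma-\delta$. Without the two-out-of-three fact the re-association simply does not get off the ground, so as written your argument has a genuine gap at the decisive step.

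A secondary, smaller issue: your ruling-out of $\gamma+\delta\in -T$ is only gestured at (``one derives $\varepsilon\in T\cap -T$''). The clean argument is: if $\gamma+\delta=-\varepsilon$ with $\varepsilon\in T$, then $\varepsilon+\gamma=-\delta$ is a root, so closedness of $T\cup\{\gamma,-\gamma\}$ forces $-\delta\in T\cup\{\gamma,-\gamma\}$, which is impossible since $-\delta\in S$ is disjoint from $T$ and $-\delta=\pm\gamma$ would make $\gamma+\delta$ equal to $0$ or $2\gamma$, neither a root. (The symmetric argument excludes $\gamma+\delta\in T$.) Note also that the paper organizes the proof slightly differently -- it first proves that $S$ itself is closed and then observes that closedness of $R$ follows -- but that difference is cosmetic; the substance you are missing is the pairwise-sums lemma above.
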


\begin{proof}
  We first show that $S$ is closed. Let $\alpha,\beta\in S$ be such that
  $\alpha+\beta\in \Phi$. We show that $T\cup \{\alpha+\beta,-\alpha-\beta\}$
  is closed. 
  Let $\gamma\in T$ be such that $\alpha+\beta+\gamma
  \in \Phi$. Then at least two of $\alpha+\beta$, $\alpha+\gamma$,
  $\beta+\gamma$ are roots. (If $\Phi$ is simply laced, this can be shown
  by using the property that for $\delta,\epsilon\in \Phi$ we have that
  $\delta+\epsilon\in \Phi$ if and only if $(\delta,\epsilon)<0$. In general
  the intersection of $\Phi$ and the set of linear combinations of $\alpha$,
  $\beta$, $\gamma$ is a root system of rank at most 3. For those root systems
  it is easy to show the statement by a brute force verification.) So at
  least one of $\alpha+\gamma$, $\beta+\gamma$ lies in $\Phi$. Suppose
  $\alpha+\gamma\in \Phi$. Since $T\cup \{\alpha,-\alpha\}$ is closed,
  $\alpha+\gamma\in T$. But then also $\beta+
  \alpha+\gamma\in T$. In the same way we see that if $-\alpha-\beta+\gamma\in
  \Phi$ then $-\alpha-\beta+\gamma\in T$. Hence $T\cup
  \{\alpha+\beta,-\alpha-\beta\}$ is closed and the same follows for $S$.

  Now it is obvious that $R$ is closed. 
  Furthermore, if $\alpha\in S$ then also $-\alpha\in S$ so that $R^r= S$,
  implying also $R^u=T$.
\end{proof}

Now fix a special closed set $T$.
The algorithm first constructs the set $S$ as in the previous lemma.
By Lemma \ref{lem:4}, $S$ contains all closed sets that are the symmetric
part of a closed set whose special part is $T$. So an immediate way of
proceeding would be to enumerate all symmetric closed subsets of $S$, for
each such set $P$ form the closed set $P\cup T$, and get rid of conjugate
copies. However, there is a much more efficient way of doing it.

First note that $S$ is a root subsystem of $\Phi$. Let $\alpha\in S$, then
the reflection $s_\alpha$ sends $T$ to itself. (Indeed, if $\beta\in T$ then the
string $\{ \beta+k\alpha \mid k\in \Z\}$ is contained in $S$ as $S$ is closed,
and it has to be contained in $T$ by Lemma \ref{lem:deccl}.)
So $T$ is stabilized by the Weyl group $W^S$ of $S$.
Hence, if two closed symmetric subsets $P,P'\subset S$ are conjugate under 
$W^S$, then $P\cup T$, $P'\cup T$ are as well, so that they are
conjugate under $W$. Now we use Dynkin's algorithm (\cite{dynkin}, see
\cite{degraaf17}, \S 5.9 for a more recent description)
to find all symmetric closed
subsets of $S$ up to conjugacy by $W^S$. For each symmetric closed set $P$
found we form the set $P\cup T$. Let $W_T$ be the stabilizer of $T$ in
$W$. Two closed subsets $P\cup T$, $P'\cup T$ are conjugate under $W$ if
and only if they are under $W_T$. So we erase the $W_T$-conjugate copies
from the list and we are done. Note that in order to execute this we do
not have to compute the stabilizers $W_T$ as they have already been constructed
by the algorithm of Section \ref{sec:3.1}.

By comparing the running times for the root systems $F_4$, $B_5$, $B_6$, $E_6$
in Tables \ref{table:performance}, \ref{comparison} we see that computing the
non-special closed sets does not weigh heavily on the runnning time. The main
part of the computation time is taken for listing the special closed sets.

\section{Closed subsets of irreducible root systems}\label{results}

In this section, we present the full classification of the closed subsets of the irreducible root systems of rank $3$ derived from the implementation of our algorithm. This classification is contained in Tables \ref {a3resultsa} through \ref{b3resultsb}.   In this section we also present summary data for the classifications of  closed subsets of the irreducible root systems of ranks $4$ through $7$ in Table \ref{comparison}. 

For each root system,  
the classification is divided into three parts: The symmetric closed subsets $T^r$, for which the special part is empty $T^u=\emptyset$; the special closed subsets $T^u$, for which the symmetric part is empty $T^r=\emptyset$; and the Levi decomposable closed subsets $T^r\mathbin{\dot{\cup}} T^u$, for which
$T^r \neq \emptyset$ and  $T^u\neq \emptyset$.   

For symmetric closed subsets, we identify the isomorphism classes. 
The symmetric closed subsets have been computed with the implementation
 of Dynkin's algorithm (\cite{dynkin}) in the {\sf SLA} package
 (\cite{degraafb}) of {\sf GAP}4.

\begin{footnotesize}
\renewcommand{\arraystretch}{1.4}
\begin{table}[h!]

\vspace{1mm}
\caption{Closed subsets for the ranks $3$ through $7$ irreducible root systems, up to conjugation by the respective Weyl group.  We exclude the empty set.}\label{comparison}
\end{table}
\end{footnotesize}
\end{landscape}

\section{Regular subalgebras}\label{regular}

Let $\mathfrak{g}$ be a semisimple Lie algebra  and $\mathfrak{h}$  a (fixed) Cartan subalgebra of $\mathfrak{g}$, with corresponding root system    $\Phi$. For $\alpha \in \Phi$ we denote $\mathfrak{g}_\alpha$ the
corresponding root space. Let $T\subseteq \Phi$ be a closed subsystem. Let $\mathfrak{t}$ be a subspace of $\mathfrak{h}$
containing $[\mathfrak{g}_\alpha, \mathfrak{g}_{-\alpha}]$ for $\alpha \in T$ such that $-\alpha \in T$. Then
\begin{equation}
\mathfrak{s}_{T,\mathfrak{t}} = \mathfrak{t} \oplus \bigoplus_{\alpha \in T} \mathfrak{g}_\alpha  
\end{equation}
is a regular subalgebra of $\mathfrak{g}$. Moreover, all regular subalgebras of $\mathfrak{g}$ arise in this manner.

Let $G$ be the adjoint group of $\mathfrak{g}$. There are many ways to characterize $G$. It is the identity component of the automorphism group of $\g$. It is the connected 
algebraic subgroup of $\mathrm{GL}(\mathfrak{g})$ with Lie algebra $\ad \mathfrak{g}$. Thirdly, it is the group generated by all $\exp(\ad x)$ for $x\in \mathfrak{g}_\alpha
$, $\alpha\in \Phi$.

The Weyl group of $\Phi$ naturally acts on the dual space $\mathfrak{h}^*$. Further, by identifying  $\mathfrak{h}$ and $\mathfrak{h}^*$ via the Killing form, the Weyl group
also acts on $\mathfrak{h}$. Letting $s_\alpha$ denote the reflection corresponding to $\alpha\in \Phi$, this means that
$s_\alpha(h)=h-\alpha(h)h_\alpha$, where $h_\alpha$ is the unique element of $[\mathfrak{g}_\alpha, \mathfrak{g}_{-\alpha}]$ with $\alpha(h_\alpha)=2$.


\begin{proposition}
The regular subalgebras $\mathfrak{s}_{T_1, \mathfrak{t}_1}$ and  $\mathfrak{s}_{T_2, \mathfrak{t}_2}$ are conjugate under $G$ if and only if there is a $w \in W(\Phi)$ with 
$w(T_1)=w(T_2)$ and $w( \mathfrak{t}_1)=w( \mathfrak{t}_2)$.
\end{proposition}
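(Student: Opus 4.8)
The strategy is that, the Cartan subalgebra $\mathfrak{h}$ being fixed, a regular subalgebra determines and is determined by a pair $(T,\mathfrak{t})$: from $\mathfrak{g}=\mathfrak{h}\oplus\bigoplus_\alpha\mathfrak{g}_\alpha$ one reads off $\mathfrak{t}=\mathfrak{s}_{T,\mathfrak{t}}\cap\mathfrak{h}$ and $T=\{\alpha\in\Phi:\mathfrak{g}_\alpha\subseteq\mathfrak{s}_{T,\mathfrak{t}}\}$. Consequently, if $g'\in G$ normalises $\mathfrak{h}$, then it normalises the maximal torus $H$ with $\operatorname{Lie}H=\mathfrak{h}$, it induces some $w\in W(\Phi)$ on $\mathfrak{h}$ (hence on $\mathfrak{h}^*$ and on $\Phi$), it sends $\mathfrak{g}_\alpha$ to $\mathfrak{g}_{w(\alpha)}$, and therefore $g'(\mathfrak{s}_{T,\mathfrak{t}})=w(\mathfrak{t})\oplus\bigoplus_{\alpha\in T}\mathfrak{g}_{w(\alpha)}=\mathfrak{s}_{w(T),\,w(\mathfrak{t})}$. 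So everything reduces to: (i) every $w\in W(\Phi)$ is induced by some $g'\in G$ normalising $\mathfrak{h}$; and (ii) any $g\in G$ with $g(\mathfrak{s}_{T_1,\mathfrak{t}_1})=\mathfrak{s}_{T_2,\mathfrak{t}_2}$ can be replaced, after composing with an element of $G$ that fixes $\mathfrak{s}_{T_2,\mathfrak{t}_2}$, by one that normalises $\mathfrak{h}$. (The condition in the statement is to be read as $w(T_1)=T_2$ and $w(\mathfrak{t}_1)=\mathfrak{t}_2$.)

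Step (i) is standard: since $G$ is the adjoint group, $W(\Phi)\cong N_G(H)/H$ with $N_G(H)$ acting on $\mathfrak{h}$, $\mathfrak{h}^*$ and the root spaces through this isomorphism; so $w$ lifts to $g'\in N_G(H)\subseteq G$, and then $g'(\mathfrak{s}_{T_1,\mathfrak{t}_1})=\mathfrak{s}_{w(T_1),w(\mathfrak{t}_1)}=\mathfrak{s}_{T_2,\mathfrak{t}_2}$. This already yields the ``if'' direction.

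For step (ii) write $\mathfrak{s}_i=\mathfrak{s}_{T_i,\mathfrak{t}_i}$, set $\mathfrak{n}=\mathfrak{n}_{\mathfrak{g}}(\mathfrak{s}_2)$, and let $P=N_G(\mathfrak{s}_2)$, a closed algebraic subgroup of $G$ with $\operatorname{Lie}P=\mathfrak{n}$. Since each summand of $\mathfrak{s}_1$ is $\mathfrak{h}$-stable we have $\mathfrak{h}\subseteq\mathfrak{n}_{\mathfrak{g}}(\mathfrak{s}_1)$, hence $g(\mathfrak{h})\subseteq g(\mathfrak{n}_{\mathfrak{g}}(\mathfrak{s}_1))=\mathfrak{n}_{\mathfrak{g}}(g\mathfrak{s}_1)=\mathfrak{n}$, and also $\mathfrak{h}\subseteq\mathfrak{n}$. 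Both $\mathfrak{h}$ and $\mathfrak{c}:=g(\mathfrak{h})$ are Cartan subalgebras of $\mathfrak{n}$, being abelian (so nilpotent) and self-normalising in $\mathfrak{g}$, hence a fortiori in $\mathfrak{n}$. I would now invoke conjugacy of Cartan subalgebras inside $\mathfrak{n}$, with the twist that the conjugating automorphism must come from $P$. This works because the classical theorem gives an element of the group $\mathcal{E}(\mathfrak{n})$ of elementary automorphisms of $\mathfrak{n}$ taking $\mathfrak{c}$ to $\mathfrak{h}$, and each generator $\exp(\operatorname{ad}_{\mathfrak{n}}x)$ of $\mathcal{E}(\mathfrak{n})$ (with $\operatorname{ad}_{\mathfrak{n}}x$ nilpotent) equals $\exp(\operatorname{ad}_{\mathfrak{n}}x_{\mathrm{n}})$, where $x_{\mathrm{n}}$ is the nilpotent part of the Jordan decomposition of $x$ in $\mathfrak{g}$; since $\mathfrak{n}=\operatorname{Lie}P$ is an algebraic Lie algebra we have $x_{\mathrm{n}}\in\mathfrak{n}$, so $\exp(\operatorname{ad}_{\mathfrak{g}}x_{\mathrm{n}})\in P$ (it is unipotent, and $\operatorname{ad}x_{\mathrm{n}}$ preserves $\mathfrak{s}_2$) and restricts to $\mathfrak{n}$ as the given generator. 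Composing these lifts produces $p\in P$ with $p(\mathfrak{c})=\mathfrak{h}$. (Alternatively one may quote directly that the Cartan subalgebras of $\operatorname{Lie}(P^\circ)$ are conjugate under $P^\circ$.) Finally $g':=p\circ g$ satisfies $g'(\mathfrak{s}_1)=p(\mathfrak{s}_2)=\mathfrak{s}_2$ (as $p\in N_G(\mathfrak{s}_2)$) and $g'(\mathfrak{h})=p(\mathfrak{c})=\mathfrak{h}$; so $g'$ induces $w\in W(\Phi)$ with $\mathfrak{s}_{w(T_1),w(\mathfrak{t}_1)}=g'(\mathfrak{s}_1)=\mathfrak{s}_2=\mathfrak{s}_{T_2,\mathfrak{t}_2}$, forcing $w(T_1)=T_2$ and $w(\mathfrak{t}_1)=\mathfrak{t}_2$.

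I expect the main obstacle to be step (ii): passing from an arbitrary $G$-conjugacy to one realised inside $N_G(\mathfrak{h})$. Its content is that $\mathfrak{h}$ and $g(\mathfrak{h})$, two Cartan subalgebras of $\mathfrak{g}$ that both normalise $\mathfrak{s}_2$, are conjugate by an element of $G$ fixing $\mathfrak{s}_2$; the only delicate point is checking that the conjugating automorphism of $\mathfrak{n}$ is induced by an element of $G$ preserving $\mathfrak{s}_2$, which the Jordan-decomposition observation (or the algebraic-group statement) secures.
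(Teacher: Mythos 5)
Your proposal is correct and follows essentially the same route as the paper: reduce to the case of a conjugating element normalising $\mathfrak{h}$ by observing that $\mathfrak{h}$ and $g(\mathfrak{h})$ are Cartan subalgebras of the normalizer of $\mathfrak{s}_{T_2,\mathfrak{t}_2}$ (the Lie algebra of $N_G(\mathfrak{s}_{T_2,\mathfrak{t}_2})$), conjugate them by an element of that group, and then read off $w$ from the map $N_G(\mathfrak{h})\to W$. The only difference is that you spell out, via elementary automorphisms and Jordan decomposition, why the conjugating automorphism can be realised inside $N_G(\mathfrak{s}_{T_2,\mathfrak{t}_2})$ — a point the paper simply asserts.
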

\begin{proof}
Let $g' \in G$ be such that $g'(\mathfrak{s}_{T_1, \mathfrak{t}_1})=\mathfrak{s}_{T_2, \mathfrak{t}_2}$. Let $\mathfrak{n}_{\mathfrak{g}} = \{ x\in \mathfrak{g} :
[x, \mathfrak{s}_{T_2, \mathfrak{t}_2}] \subset \mathfrak{s}_{T_2, \mathfrak{t}_2} \}$ be the normalizer of $\mathfrak{s}_{T_2, \mathfrak{t}_2}$. This is the Lie algebra
of the algebraic group $N_G(\mathfrak{s}_{T_2, \mathfrak{t}_2})=\{g\in G : g(\mathfrak{s}_{T_2, \mathfrak{t}_2})=\mathfrak{s}_{T_2, \mathfrak{t}_2}\}.$
Since $\mathfrak{h}$ and $g'(\mathfrak{h})$ are Cartan subalgebras of $\mathfrak{n}_{\mathfrak{g}}(\mathfrak{s}_{T_2, \mathfrak{t}_2})$, there is a $g''\in N_G(\mathfrak{s}_{T_2, \mathfrak{t}_2})$ with $g''g'(\mathfrak{h})=\mathfrak{h}$. Set $g_0=g''g'$. Note that $g_0 \in N_G(\mathfrak{h})$. There is a canonical homomorphism $N_G(\mathfrak{h})
\rightarrow W$ with kernel $Z_G(\mathfrak{h})$ and let $w\in W(\Phi)$ be the image of $g_0$. A short calculation shows that $g_0(\mathfrak{g}_\alpha)=\mathfrak{g}_{\beta}$ where
$\beta(h)= \alpha(g_0^{-1}(h))$ for $h\in \mathfrak{h}$. But this implies that $\beta =w(\alpha)$.  (This is easy to see if $w$ is a reflection, and it then follows in general.) So we see that
$w(T_1)=T_2$. Also $w(\mathfrak{t}_1)=g_0(\mathfrak{s}_{T_1, \mathfrak{t}_1} \cap \mathfrak{h}) =\mathfrak{s}_{T_2, \mathfrak{t}_2} \cap \mathfrak{h}=
\mathfrak{t}_2$.

For the reverse implication, let $g_0 \in N_G(\mathfrak{h})$ map to $w$. Then $g_0(\mathfrak{g}_\alpha)=\mathfrak{g}_{w(\alpha)}$ and
$g_0(\mathfrak{t}_1)=\mathfrak{t}_2$. It follows that $g_0(\mathfrak{s}_{T_1, \mathfrak{t}_1})=\mathfrak{s}_{T_2, \mathfrak{t}_2}$.
\end{proof}

\begin{corollary}\label{corollary}
Let $T\subseteq \Phi$ be a closed set and $W(\Phi)_T=\{w\in W(\Phi): w(T)=T  \}$ its stabilizer in $W(\Phi)$. The regular subalgebras
$\mathfrak{s}_{T, \mathfrak{t}_1}$ and  $\mathfrak{s}_{T, \mathfrak{t}_2}$ are conjugate under $G$ if and only if there is a $w\in W(\Phi)_T$ such that
$w(\mathfrak{t}_1)=\mathfrak{t}_2$.
\end{corollary}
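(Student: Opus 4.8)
The plan is to obtain this as a direct specialization of the preceding Proposition, taking $T_1=T_2=T$. With this choice the Proposition says that $\mathfrak{s}_{T,\mathfrak{t}_1}$ and $\mathfrak{s}_{T,\mathfrak{t}_2}$ are conjugate under $G$ if and only if there is a $w\in W(\Phi)$ with $w(T)=T$ and $w(\mathfrak{t}_1)=\mathfrak{t}_2$. The only additional observation needed is the tautological one that the condition $w(T)=T$ is exactly the assertion $w\in W(\Phi)_T$; so the statement of the corollary is literally a rephrasing of this special case.

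Concretely, for the forward implication I would assume $\mathfrak{s}_{T,\mathfrak{t}_1}$ and $\mathfrak{s}_{T,\mathfrak{t}_2}$ are $G$-conjugate, apply the Proposition with $T_1=T_2=T$ to produce $w\in W(\Phi)$ with $w(T)=T$ and $w(\mathfrak{t}_1)=\mathfrak{t}_2$, and then note that $w(T)=T$ means precisely $w\in W(\Phi)_T$. For the reverse implication I would start from a $w\in W(\Phi)_T$ with $w(\mathfrak{t}_1)=\mathfrak{t}_2$; since membership in the stabilizer gives $w(T)=T$, the pair $(w,w)$ satisfies the hypotheses of the ``if'' direction of the Proposition (with $T_1=T_2=T$), whence $\mathfrak{s}_{T,\mathfrak{t}_1}$ and $\mathfrak{s}_{T,\mathfrak{t}_2}$ are conjugate under $G$.

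I do not expect any real obstacle: the corollary is a verbatim instance of the Proposition, and nothing beyond bookkeeping is involved. The only point worth flagging is that one must check the constraint ``$w$ fixes $T$'' survives the specialization and is correctly identified with the stabilizer condition — but this is immediate from the definition of $W(\Phi)_T$. If one preferred not to invoke the Proposition as a black box, one could instead re-run its proof verbatim in the case $T_1=T_2=T$, observing that the element $g_0\in N_G(\mathfrak{h})$ constructed there maps to a Weyl group element $w$ with $w(T)=T$ and $w(\mathfrak{t}_1)=\mathfrak{t}_2$; but this only reproduces the argument just sketched.
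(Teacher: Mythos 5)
Your proposal is correct and matches the paper's (implicit) reasoning: the paper states the corollary without proof precisely because it is the specialization $T_1=T_2=T$ of the preceding Proposition, with $w(T)=T$ read as membership in $W(\Phi)_T$. The only thing worth noting is that the Proposition's displayed condition ``$w(T_1)=w(T_2)$ and $w(\mathfrak{t}_1)=w(\mathfrak{t}_2)$'' is a typo for ``$w(T_1)=T_2$ and $w(\mathfrak{t}_1)=\mathfrak{t}_2$'' (as its proof makes clear), and you have correctly read it that way.
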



\subsection{Regular subalgebras of the simple Lie algebra of type $A_3$}
The (complex) Lie algebra of type $A_3$ is the special linear algebra $\mathfrak{sl}(4, \mathbb{C})$ of traceless
$4\times 4$ complex matrices.
The Cartan subalgebra $\mathfrak{h}$ that we use consists of the diagonal elements of $\mathfrak{sl}(4, \mathbb{C})$. The Weyl group $W$ is isomorphic to the symmetric group $S_4$, acting on 
$\mathfrak{h}$ by permuting the diagonal entries. More precisely, for $\pi \in S_4$ we have
\begin{equation}\label{actiona3}
\pi \cdot \mathrm{diag}(a_1, a_2, a_3, a_4) =\mathrm{diag}(a_{\pi(1)}, a_{\pi(2)}, a_{\pi(3)}, a_{\pi(4)}).
\end{equation}
We have that $W$ is generated by the simple reflections $s_1, s_2, s_3$, and the isomorphism $W \rightarrow S_4$ maps
$s_1 \mapsto (1, 2)$,  $s_2 \mapsto (2, 3)$, $s_3 \mapsto (3, 4)$. A $W$-invariant scalar product on $\mathfrak{h}$
is defined by
\begin{equation}
h \cdot h'= \mathrm{trace}(h h'), ~ h, h' \in \mathfrak{h}.
\end{equation}
The simple roots of the root system of  $\mathfrak{sl}(4, \mathbb{C})$ are $\alpha_1, \alpha_2, \alpha_3$. The other positive roots are
$\alpha_1+ \alpha_2$,  $\alpha_2+ \alpha_3$, $\alpha_1+ \alpha_2+\alpha_3$. 

We now wish to classify the regular subalgebras of $\mathfrak{sl}(4, \mathbb{C})$. To do so, for each closed subset $T$ of the  root system of 
type $A_3$, we will determine the possible $\mathfrak{t} \subset \mathfrak{h}$ such that $\mathfrak{s}_{T, \mathfrak{t}}$ is a regular subalgebra.
The cases in which $\mathfrak{t}$ is $0-$ or $3-$dimensional are trivial, so we only consider the $1-$ and $2-$dimensional cases.


In the following example, we explicitly describe the process of determining the regular subalgebras obtained from a given closed subset of $A_3$. Determining the regular subalgebras for all other closed subsets of $A_3$ is obtained in a similar manner, and the complete results are summarized in Table \ref{rega31} through Table \ref{rega31b}.

\begin{example}
Consider the special closed subset of $A_3$ 
\begin{equation}
T=\{\alpha_2,  \alpha_3, \alpha_1+\alpha_2, \alpha_2+\alpha_3, \alpha_1+\alpha_2+\alpha_3\},
\end{equation}
which is located in row 2 of Table \ref{a3resultsa}. A calculation shows that the stabilizer of $T$, denoted $W_T$, is generated by 
the single element $s_1$. 

Since $T$ consists of positive root vectors, we may construct a regular subalgebra 
\begin{equation}
\mathfrak{s}_{T,\mathfrak{t}} = \mathfrak{t} \oplus \bigoplus_{\alpha \in T} \mathfrak{g}_\alpha 
\end{equation}
by letting $\mathfrak{t}$  be any subspace of the Cartan subalgebra $\mathfrak{h}$.  
Then, the possible $1$-dimensional  $\mathfrak{t}$ are spanned by the non-trivial traceless elements $h_{a_1, a_2, a_3}=\mathrm{diag}(a_1, a_2, a_3, -a_1-a_2-a_3)$ for $a_1, a_2, a_3 \in \mathbb{C}$.

Two such regular subalgebras determined by  $h_{a_1, a_2, a_3}$ and $h_{b_1, b_2, b_3}$, respectively, are $W$-conjugate if and only if $s_1 \cdot h_{a_1, a_2, a_3}$ is a scalar multiple of $h_{b_1, b_2, b_3}$ by Corollary \ref{corollary}. Considering that $s_1$ acts as the permutation $(1,2)$ on  the entries of $h_{a_1, a_2, a_3}$,  as described in Eq. \eqref{actiona3},  these regular subalgebras are $W$-conjugate if and only if   $(a_1, a_2, a_3)$ is a scalar multiple of $(b_1, b_2, b_3)$ or $(b_2, b_1, b_3)$.  

To determine the regular subalgebras constructed from $2$-dimensional $\mathfrak{t}$, we use the fact that a $2$-dimensional subspace is uniquely determined by a nonzero element
of $\mathfrak{h}$ perpendicular to it. Since the inner product is $W$-invariant, two subspaces are $W$-conjugate if and only if their perpendicular vectors are. 
Thus, the $2$-dimensional $\mathfrak{t}$ are the spaces orthogonal to $h_{a_1, a_2, a_3}$, for any $a_1, a_2, a_3 \in \mathbb{C}$, all of which are not zero.  

Using the same reasoning as in the $1$-dimensional case,  regular subalgebras with $\mathfrak{t}$ given by the space orthogonal to $h_{a_1, a_2, a_3}$, is conjugate to the space orthogonal to $h_{b_1, b_2, b_3}$ if and only if $(a_1, a_2, a_3)$ is a scalar multiple of $(b_1, b_2, b_3)$ or $(b_2, b_1, b_3)$. 

The results of this example are summarized in row 2 of Table \ref{rega31}.
\end{example}

\begin{tiny}
\begin{landscape}
\renewcommand{\arraystretch}{1.5}
\begin{table}[h!]
\begin{tabular}{|l|c|l|}
  \hline 
   \rowcolor{Gray}
  $\mathbf{T}$ & $\mathbf{W_{T}}$  & $\mathbf{\mathfrak{t}}$ \\ 
  \hline
  \hline
1&$1$& $\mathfrak{t}$ can be any subspace of $\mathfrak{h}$, and all of them yield non-conjugate regular subalgebras.\\
      \hline
      2&$s_1$ &  $1$-dimensional: Spanned by $h_{a_1, a_2, a_3}=\mathrm{diag}(a_1, a_2, a_3, -a_1-a_2-a_3)$;  $\langle h_{a_1, a_2, a_3} \rangle \sim \langle h_{b_1, b_2, b_3}\rangle$ iff $(a_1, a_2, a_3)$ is a scalar multiple of $(b_1, b_2, b_3)$
      or $(b_2, b_1, b_3)$.\\
      && $2$-dimensional: Spaces orthogonal to $h_{a_1, a_2, a_3}$ with the same conjugacy condition.\\
      \hline
      3&$s_3$&  $1$-dimensional: Spanned by $h_{a_1, a_2, a_3}=\mathrm{diag}(-a_1-a_2-a_3, a_1, a_2, a_3)$;  $\langle h_{a_1, a_2, a_3} \rangle \sim \langle h_{b_1, b_2, b_3}\rangle$ iff $(a_1, a_2, a_3)$ is a scalar multiple of $(b_1, b_2, b_3)$
      or $(b_1, b_3, b_2)$.\\
      && $2$-dimensional: Spaces orthogonal to $h_{a_1, a_2, a_3}$ with the same conjugacy condition.\\
      \hline
      4&$ s_2$&  $1$-dimensional: Spanned by $h_{a_1, a_2, a_3}=\mathrm{diag}(-a_1-a_2-a_3, a_1, a_2, a_3)$;  $\langle h_{a_1, a_2, a_3} \rangle \sim \langle h_{b_1, b_2, b_3}\rangle$ iff $(a_1, a_2, a_3)$ is a scalar multiple of $(b_1, b_2, b_3)$
      or $(b_2, b_1, b_3)$.\\
      && $2$-dimensional: Spaces orthogonal to $h_{a_1, a_2, a_3}$ with the same conjugacy condition.\\
      \hline
      5-9&$1$&$\mathfrak{t}$ can be any subspace of $\mathfrak{h}$, and all of them yield non-conjugate regular subalgebras. \\
      \hline
      10&$s_1, s_2$&$1$-dimensional: Spanned by $h_{a_1, a_2, a_3}=\mathrm{diag}(a_1, a_2, a_3, -a_1-a_2-a_3)$; $\langle h_{a_1, a_2, a_3} \rangle \sim \langle h_{b_1, b_2, b_3}\rangle$ iff
      $\{a_1, a_2, a_3\}$ is a scalar multiple of $\{b_1, b_2, b_3\}$. \\
      && Note: $W_T$ consists of every permutation of the first three coordinates. \\
       && $2$-dimensional: Spaces orthogonal to $h_{a_1, a_2, a_3}$ with the same conjugacy condition.\\
      \hline
      11&$s_2, s_3$&$1$-dimensional: Spanned by $h_{a_1, a_2, a_3}=\mathrm{diag}(-a_1-a_2-a_3, a_1, a_2, a_3)$; $\langle h_{a_1, a_2, a_3} \rangle \sim \langle h_{b_1, b_2, b_3}\rangle$ iff
      $\{a_1, a_2, a_3\}$ is a scalar multiple of $\{b_1, b_2, b_3\}$. \\
      && Note: $W_T$ consists of every permutation of the last three coordinates. \\
       && $2$-dimensional: Spaces orthogonal to $h_{a_1, a_2, a_3}$ with the same conjugacy condition.\\
      \hline
      12&$ s_1 s_3$&$1$-dimensional: Spanned by $h_{a_1, a_2, a_3}=\mathrm{diag}(a_1, a_2, a_3, -a_1-a_2-a_3)$;  $\langle h_{a_1, a_2, a_3} \rangle \sim \langle h_{b_1, b_2, b_3}\rangle$ iff they're equal or \\ &&$(a_1, a_2, a_3)$ is a scalar multiple of $(b_2, b_1, -b_1- b_2-b_3)$. \\
     &&   $2$-dimensional: Spaces orthogonal to $h_{a_1, a_2, a_3}$ with the same conjugacy condition. \\
      \hline
      13&$ s_3$&$1$-dimensional: Spanned by $h_{a_1, a_2, a_3}=\mathrm{diag}(-a_1-a_2-a_3, a_1, a_2, a_3)$;  $\langle h_{a_1, a_2, a_3} \rangle \sim \langle h_{b_1, b_2, b_3}\rangle$ iff they're equal or \\
      &&$(a_1, a_2, a_3)$ is a scalar multiple of $(b_1, b_3, b_2)$. \\
     &&   $2$-dimensional: Spaces orthogonal to $h_{a_1, a_2, a_3}$ with the same conjugacy condition.\\
      \hline
      14&$s_1s_2s_1$&$1$-dimensional: Spanned by $h_{a_1, a_2, a_3}=\mathrm{diag}(a_1, a_2, a_3, -a_1-a_2-a_3)$;  $\langle h_{a_1, a_2, a_3} \rangle \sim \langle h_{b_1, b_2, b_3}\rangle$ iff they're equal or $(a_1, a_2, a_3)$ \\ && is a scalar multiple of $(b_3, b_2, b_1)$. \\
     &&   $2$-dimensional: Spaces orthogonal to $h_{a_1, a_2, a_3}$ with the same conjugacy condition.\\
      \hline
      15&$s_1$ &  $1$-dimensional: Spanned by $h_{a_1, a_2, a_3}=\mathrm{diag}(a_1, a_2, a_3, -a_1-a_2-a_3)$;  $\langle h_{a_1, a_2, a_3} \rangle \sim \langle h_{b_1, b_2, b_3}\rangle$ iff $(a_1, a_2, a_3)$ is a scalar multiple of $(b_1, b_2, b_3)$
      or $(b_2, b_1, b_3)$.\\
      && $2$-dimensional: Spaces orthogonal to $h_{a_1, a_2, a_3}$ with the same conjugacy condition.\\
      \hline
\end{tabular}
\vspace{2mm}
\caption{Special closed subsets $T$ of $A_3$ with rows numbered according to the row numbering of Table \ref{a3resultsa};  the generators of the stabilizer of $T$,  denoted $W_T$; and the $1$- and $2$-dimensional subalgebras $\mathfrak{t}$ of the Cartan subalgebra of the regular subalgebras determined by $T$. 
}\label{rega31}
\end{table}
\end{landscape}
\end{tiny}

\begin{scriptsize}
\begin{landscape}
\renewcommand{\arraystretch}{1.5}
\begin{table}[h!]
\begin{tabular}{|l|c|l|}
  \hline 
   \rowcolor{Gray}
  $\mathbf{T}$ & $\mathbf{W_{T}}$  & $\mathbf{\mathfrak{t}}$ \\ 
  \hline
  \hline
16&$ s_1$& $1$-dimensional: $\langle \mathrm{diag}(1, -1, 0, 0)\rangle$ \\
& & $2$-dimensional: Spaces orthogonal to $\mathrm{diag}(a, a, b, -2a-b)$, and all are non-conjugate. \\
      \hline
      17&$ s_3$& $1$-dimensional:  $\langle \mathrm{diag}(0, 0, 1, -1)\rangle$\\
& & $2$-dimensional: Spaces orthogonal to $\mathrm{diag}(-a-2b, a, b, b)$, and all are non-conjugate.\\
      \hline
      18&$s_2$& $1$-dimensional:  $\langle \mathrm{diag}(0, 1, -1, 0)\rangle$\\
& & $2$-dimensional: Spaces orthogonal to $\mathrm{diag}(-a-2b, a, a, b)$, and all are non-conjugate.\\
      \hline
      19&$s_1, s_3$& $\langle \mathrm{diag}(1,-1,0,0), \mathrm{diag}(0,0,1,-1)\rangle$ \\
      \hline
      20&$s_1, s_2$& $\langle \mathrm{diag}(1,-1,0,0), \mathrm{diag}(0,1,-1,0)\rangle$ \\
      \hline
      21&$s_2, s_3$& $\langle \mathrm{diag}(0,1,-1,0), \mathrm{diag}(0,0,1,-1)\rangle$ \\
      \hline
      22&$s_1, s_3$&$1$-dimensional: $\langle \mathrm{diag}(0,0,1,-1)\rangle$ \\
      &&  $2$-dimensional: Spaces orthogonal to $h_{a, b, b}=\mathrm{diag}(-a-2b, a, b, b)$. The space orthogonal to $h_{a, b, b}$ is conjugate to that\\
      &&  orthogonal to $h_{a', b', b'}$ iff $(-a'-2b', a', b')$ is a scalar multiple of $(-a-2b, a, b)$ or $(a, -a-2b, b)$. \\
      \hline
      23&$s_1, s_3$&$1$-dimensional: $\langle \mathrm{diag}(1,-1,0, 0)\rangle$ \\

        &&  $2$-dimensional:  Spaces orthogonal to $h_{a,  a, b }=\mathrm{diag}(a, a, b, -2a-b)$. The space orthogonal to $ h_{a,  a, b }$ is conjugate to that   \\
        &&  orthogonal to $ h_{a', a', b' }$ iff $(a', b', -2a'-b')$ is a scalar multiple of $(a,  b, -2a-b)$ or $(a,  -2a-b, b )$.  \\
      \hline
      24&$s_1$& $1$-dimensional:  $\langle \mathrm{diag}(1, -1, 0, 0)\rangle$\\
      & & $2$-dimensional: Spaces orthogonal to $\mathrm{diag}(a, a, b, -2a-b)$, and all are non-conjugate. \\
      \hline
      25&$ s_2$& $1$-dimensional:  $\langle \mathrm{diag}(0, 1, -1, 0)\rangle$\\
        & & $2$-dimensional: Spaces orthogonal to $\mathrm{diag}(-2a-b, a, a, b)$, and all are non-conjugate. \\
      \hline
      26&$ s_1s_2s_1$& $1$-dimensional:  $\langle \mathrm{diag}(1, 0, -1, 0)\rangle$\\
      & & $2$-dimensional: Spaces orthogonal to $\mathrm{diag}(a, b, a, -2a-b)$, and all are non-conjugate.\\
      \hline
      27&$s_3$& $1$-dimensional:  $\langle \mathrm{diag}(0, 0, 1, -1)\rangle$\\
      & & $2$-dimensional: Spaces orthogonal to $\mathrm{diag}(-a-2b, a, b, b)$, and all are non-conjugate.\\
      \hline
      28&$s_1s_2s_1$& $1$-dimensional:  $\langle \mathrm{diag}(1, 0, -1, 0)\rangle$\\
      & & $2$-dimensional: Spaces orthogonal to $\mathrm{diag}(a, b, a, -2a-b)$, and all are non-conjugate.\\
      \hline
\end{tabular}
\vspace{2mm}
\caption{Levi decomposable closed subsets $T$ of $A_3$  with rows numbered according to the row numbering of Table \ref{a3resultsa};  the generators of the stabilizer of $T$,  denoted $W_T$; and the $1$- and $2$-dimensional subalgebras $\mathfrak{t}$ of the Cartan subalgebra of the regular subalgebras determined by $T$.}
\end{table}
\end{landscape}
\end{scriptsize}

\begin{footnotesize}
\begin{landscape}
\renewcommand{\arraystretch}{1.5}
\begin{table}[h!]
\begin{tabular}{|l|c|l|}
  \hline 
   \rowcolor{Gray}
  $\mathbf{T}$ & $\mathbf{W_T}$  & $\mathbf{\mathfrak{t}}$ \\ 
  \hline
  \hline
 $29$ & $ s_1, s_3$& $1$-dimensional: $\langle \mathrm{diag}(1, -1, 0, 0) \rangle$\\ 
  &&  $2$-dimensional:  Spaces orthogonal to $h_{a,  a, b }=\mathrm{diag}(a, a, b, -2a-b)$. The space orthogonal to $ h_{a,  a, b }$ is conjugate to that   \\
        &&  orthogonal to $ h_{a', a', b' }$ iff $(a', b', -2a'-b')$ is a scalar multiple of $(a,  b, -2a-b)$ or $(a,  -2a-b, b )$. \\
  \hline
  $30$&$s_1, s_2$ &$\langle \mathrm{diag}(1, -1, 0, 0),
 \mathrm{diag}(0, 1, -1, 0)  \rangle$\\ 
      \hline
   $31$ & $s_1, s_3, s_2s_1s_3s_2$    &$\langle \mathrm{diag}(1, -1, 0, 0),
 \mathrm{diag}(0, 0, 1, -1)  \rangle$ \\
      \hline
\end{tabular}
\vspace{2mm}
\caption{Symmetric closed subsets  $T$ of $A_3$ with rows numbered according to the row numbering of Table \ref{a3resultsb};  the generators of the stabilizer of $T$,  denoted $W_T$; and the $1$- and $2$-dimensional subalgebras $\mathfrak{t}$ of the Cartan subalgebra of the regular subalgebras determined by $T$. We omit the case $T=\Phi$, for which there are no nonempty $\mathfrak{t}$.}\label{rega31b}
\end{table}
\end{landscape}
\end{footnotesize}

\subsection{Regular subalgebras of the simple Lie algebra of type $B_3$}

The (complex) Lie algebra of type $B_3$ is the special orthogonal algebra $\mathfrak{so}(7, \mathbb{C})$. It is the Lie algebra of $7\times 7$ complex matrices satisfying
\begin{equation}
X^t M =-MX,
\end{equation}
where 
\begin{equation}
M= \begin{pmatrix}
    2& 0 &   0 \\
    0 & 0 &  I_3 \\
    0 &I_3& 0
    \end{pmatrix}.
\end{equation}

A Cartan subalgebra $\mathfrak{h}$ is given by elements of the form 
\begin{equation}
h=\mathrm{diag}(0, \lambda_1, \lambda_2, \lambda_3, -\lambda_1, -\lambda_2, -\lambda_3),
\end{equation}
 for $\lambda_1, \lambda_2, \lambda_3 \in \mathbb{C}$. 
The positive roots are 
\begin{equation}
\Phi^+ =\Bigg\{
\begin{array}{llllllllll}
\alpha_1,   &\alpha_2, &\alpha_3, \\[-2\jot]
 \alpha_1+\alpha_2, &\alpha_2+\alpha_3, &  \alpha_1+\alpha_2+ \alpha_3, \\[-2\jot]
\alpha_2+2\alpha_3, & \alpha_1+\alpha_2+2 \alpha_3, & \alpha_1+2\alpha_2+2 \alpha_3
\end{array}\Bigg\},
\end{equation} 
where
\begin{equation}
\begin{array}{lllllllll}
\alpha_1(h)&=& \lambda_1-\lambda_2,\\
\alpha_2(h)&=& \lambda_2-\lambda_3,\\
\alpha_3(h)&=& \lambda_3.
\end{array}
\end{equation}
Define a change of basis 
\begin{equation}
\beta_1=\alpha_1+\alpha_2+\alpha_3,  \quad \beta_2=\alpha_2+\alpha_3, \quad \beta_3=\alpha_3.
\end{equation}
The fundamental reflections are 
\begin{equation}
\begin{array}{llllllllllll}
s_i(\beta_i)&=&\beta_{i+1}, &s_i(\beta_{i+1})&=&\beta_i, & i=1,2, \\
s_i(\beta_j)&=&\beta_j,&&& & j\neq i, i+1, i=1,2, \\
s_3(\beta_3)&=&-\beta_3, & s_3(\beta_i)&=&\beta_i, & i\neq 3.
\end{array}
\end{equation}
The Weyl group generated by the fundamental reflections is then seen to be isomorphic to $\mathbb{Z}_2^3 \rtimes S_3$:  The isomorphism is given by $s_1 \mapsto (1,2)\in S_3$, 
$s_2 \mapsto (2, 3)\in S_3$, and $s_3 \mapsto (0,0,1)\in \mathbb{Z}_2^3$.  The actions of $s_1$, $s_2$, and $s_3$ on the simple roots $\alpha_1$, $\alpha_2$, and $\alpha_3$ are
\begin{equation}
\begin{array}{lllllllllllllllll}
s_1(\alpha_1) &=&-\alpha_1, & s_2(\alpha_1) &=&\alpha_1+\alpha_2, & s_3(\alpha_1) &=&\alpha_1, \\
s_1(\alpha_2) &=&\alpha_1+\alpha_2, & s_2(\alpha_2) &=&- \alpha_2, & s_3(\alpha_2) &=&\alpha_2+2\alpha_3, \\
s_1(\alpha_3) &=&\alpha_3 & s_2(\alpha_3) &=&\alpha_2+\alpha_3, & s_3(\alpha_3) &=& -\alpha_3.
\end{array}
\end{equation}

The correspondence, due to the Killing form, between $\mathfrak{h}^*$ and $\mathfrak{h}$  determines an action of the Weyl group on $\mathfrak{h}$.
Specifically, a computation shows that we have the correspondence
\begin{equation}
\mathrm{diag}(0, \lambda_1, \lambda_2, \lambda_3, -\lambda_1, -\lambda_2, -\lambda_3) ~ \longleftrightarrow ~ 2(\lambda_1\beta_1+\lambda_2\beta_2+\lambda_3\beta_3).
\end{equation}
The actions of $s_1, s_2$, and $s_3$ on  $\mathfrak{h}$ are given by:
\begin{small}
\begin{equation}
\begin{array}{llllll}
 s_1 \cdot \mathrm{diag}(0, \lambda_1, \lambda_2, \lambda_3, -\lambda_1, -\lambda_2, -\lambda_3) 
=\mathrm{diag}(0, \lambda_{2}, \lambda_{1}, \lambda_{3}, -\lambda_{2}, -\lambda_{1}, -\lambda_{3}),\\
 s_2 \cdot \mathrm{diag}(0, \lambda_1, \lambda_2, \lambda_3, -\lambda_1, -\lambda_2, -\lambda_3) 
=\mathrm{diag}(0, \lambda_{1}, \lambda_{3}, \lambda_{2}, -\lambda_{1}, -\lambda_{3}, -\lambda_{2}),\\
s_3 \cdot \mathrm{diag}(0, \lambda_1, \lambda_2, \lambda_3, -\lambda_1, -\lambda_2, -\lambda_3) 
=\mathrm{diag}(0, \lambda_{1}, \lambda_{2}, -\lambda_{3}, -\lambda_{1}, -\lambda_{2}, \lambda_{3}).
\end{array}
\end{equation}\end{small}

We now describe a set  of generators for $\mathfrak{so}(7, \mathbb{C})$. Number matrix rows and columns $0, 1, 2, 3, -1, -2, -3$. Then, positive root vectors $e_\alpha$ and negative root vectors $f_\alpha$ corresponding to the simple roots are 
\begin{equation}
\begin{array}{llllll}
e_{\alpha_1} &=& E_{12}-E_{-2, -1}, & f_{\alpha_1} &=& E_{21}-E_{-1,-2},\\
e_{\alpha_2} &=& E_{2 3}-E_{-3, -2}, & f_{\alpha_2} &=& E_{3 2}-E_{-2, -3},\\
e_{\alpha_3} &=& 2E_{30}-E_{0, -3}, & f_{\alpha_3} &=&  E_{0 3}-2E_{-3, 0}.
\end{array}
\end{equation}
These root vectors determine the following basis of the Cartan subalgebra $\mathfrak{h}$:
\begin{equation}
\begin{array}{lllllllll}
h_{\alpha_1}&=&  \mathrm{diag} (0,1,-1,0,-1,1,0), \\
 h_{\alpha_2}&=& \mathrm{diag} (0,0,1,-1,0,-1,1), \\
 h_{\alpha_3}&=&\mathrm{diag}  (0,0,0,2,0,0,-2).
 \end{array}
\end{equation}

A $W$-invariant inner product on $\mathfrak{h}$ is defined by
\begin{equation}
h \cdot h'= \mathrm{trace}(h h').
\end{equation}

We now wish to classify the regular subalgebras of $\mathfrak{so}(7, \mathbb{C})$. To do so, for each closed subset $T$ of the  root system of 
type $B_3$, we will determine the possible $\mathfrak{t} \subset \mathfrak{h}$ such that $\mathfrak{s}_{T, \mathfrak{t}}$ is a regular subalgebra.
The cases in which $\mathfrak{t}$ is $0-$ or $3-$dimensional are trivial, so we only consider the $1-$ and $2-$dimensional cases.

In the following example, we explicitly describe the process of determining  the regular subalgebras obtained from a given closed subset of $B_3$. Determining the regular subalgebras for all other closed subsets of $B_3$ is obtained in a similar manner, and the complete results are summarized in Tables  \ref{bsregstart} through \ref{bsregend}.

\begin{example}
Consider the Levi decomposable closed subset of $B_3$ 

\begin{equation}
T=\Bigg\{
\begin{array}{llllllllllll}
  \alpha_1, \alpha_2,  \alpha_3,  \alpha_1+\alpha_2, 
 \alpha_2+\alpha_3, 
 \alpha_1+\alpha_2+ \alpha_3,  \\  [-0.3\jot] 2\alpha_2+\alpha_3,
 \alpha_1+2\alpha_2+ \alpha_3,  2\alpha_1+2\alpha_2+ \alpha_3, -\alpha_3  
 \end{array}
 \Bigg\},
\end{equation}
which is located in row  47 of Table \ref{b3resultsa}. A calculation shows that the stabilizer of $T$, denoted $W_T$, is generated by 
the single element $s_3$.  

Since $T$ contains precisely one negative/positive root pair, $\{\alpha_3, -\alpha_3\}$, $\mathfrak{t}$ must contain $h_{\alpha_3}$. Hence, there is only one $1$-dimensional $\mathfrak{t}$, which is 
given by $\langle h_{\alpha_3} \rangle$.

We now determine the $2$-dimensional $\mathfrak{t}$, each of which must contain $h_{\alpha_3}$. Each such subspace is determined by a vector
orthogonal to it, and since $\mathfrak{t}$ contains $h_{\alpha_3}$, the orthogonal vector must be of the form $\mathrm{diag}(0,a_1, a_2,0,-a_1,-a_2,0)$. 
 Since $s_3$ fixes the vector $\mathrm{diag}(0,a_1, a_2,0,-a_1,-a_2,0)$, Corollary \ref{corollary} implies that each such subspace is non-conjugate.


The results of this example are summarized in row 47 of Table \ref{regb3ex}.
\end{example}

\begin{footnotesize}
\begin{landscape}
\renewcommand{\arraystretch}{1.5}
\begin{table}[h!]

\vspace{2mm}
\caption{Symmetric closed subsets  $T$ of $B_3$;  the generators of the stabilizer of $T$,  denoted $W_T$; and the $1$- and $2$-dimensional subalgebras $\mathfrak{t}$ of the Cartan subalgebra of the regular subalgebras determined by $T$. We include only those closed systems of rank $1$, as the others cases are straightforward. In this table $h_{a_1, a_2, a_3}=\mathrm{diag}(0, a_1, a_2, a_3, -a_1, -a_2, -a_3)$.}
\end{table}
\end{landscape}
\end{footnotesize}

\subsection{Regular subalgebras of the simple Lie algebra of type $C_3$}

The (complex) Lie algebra of type $C_3$ is the symplectic algebra $\mathfrak{sp}(6, \mathbb{C})$. 
It is the Lie algebra of $6\times 6$ complex matrices satisfying
\begin{equation}
X^t M =-MX,
\end{equation}
where 
\begin{equation}
M= \begin{pmatrix}
     0 &  I_3 \\
    -I_3& 0
    \end{pmatrix}.
\end{equation}

A Cartan subalgebra $\mathfrak{h}$ is given by elements of the form 
\begin{equation}
h=\mathrm{diag}(\lambda_1, \lambda_2, \lambda_3, -\lambda_1, -\lambda_2, -\lambda_3),
\end{equation}
 for $\lambda_1, \lambda_2, \lambda_3 \in \mathbb{C}$. 
The positive roots are 
\begin{equation}
\Phi^+ =\Bigg\{
\begin{array}{llllllllll}
\alpha_1,   &\alpha_2, &\alpha_3, \\[-2\jot]
 \alpha_1+\alpha_2, &\alpha_2+\alpha_3, &  \alpha_1+\alpha_2+ \alpha_3, \\[-2\jot]
2\alpha_2+\alpha_3, & \alpha_1+2\alpha_2+ \alpha_3, & 2\alpha_1+2\alpha_2+ \alpha_3
\end{array}\Bigg\},
\end{equation} 
where
\begin{equation}
\begin{array}{lllllllll}
\alpha_1(h)&=& \lambda_1-\lambda_2,\\
\alpha_2(h)&=& \lambda_2-\lambda_3,\\
\alpha_3(h)&=& 2\lambda_3.
\end{array}
\end{equation}
Define a change of basis 
\begin{equation}
\beta_1=\alpha_1+\alpha_2+\frac{1}{2}\alpha_3,  \quad \beta_2=\alpha_2+\frac{1}{2}\alpha_3, \quad \beta_3=\frac{1}{2}\alpha_3.
\end{equation}
The fundamental reflections are the same as those for $B_3$:
\begin{equation}
\begin{array}{llllllllllll}
s_i(\beta_i)&=&\beta_{i+1}, &s_i(\beta_{i+1})&=&\beta_i, & i=1,2, \\
s_i(\beta_j)&=&\beta_j,&&& & j\neq i, i+1, i=1,2, \\
s_3(\beta_3)&=&-\beta_3, & s_3(\beta_i)&=&\beta_i, & i\neq 3.
\end{array}
\end{equation}
The Weyl group generated by the fundamental reflections is then seen to be isomorphic to $\mathbb{Z}_2^3 \rtimes S_3$:  The isomorphism is given by $s_1 \mapsto (1,2)\in S_3$, 
$s_2 \mapsto (2, 3)\in S_3$, and $s_3 \mapsto (0,0,1)\in \mathbb{Z}_2^3$.  The actions of $s_1$, $s_2$, and $s_3$ on the simple roots $\alpha_1$, $\alpha_2$, and $\alpha_3$ are
\begin{equation}
\begin{array}{lllllllllllllllll}
s_1(\alpha_1) &=&-\alpha_1, & s_2(\alpha_1) &=&\alpha_1+\alpha_2, & s_3(\alpha_1) &=&\alpha_1, \\
s_1(\alpha_2) &=&\alpha_1+\alpha_2, & s_2(\alpha_2) &=&- \alpha_2, & s_3(\alpha_2) &=&\alpha_2+\alpha_3, \\
s_1(\alpha_3) &=&\alpha_3 & s_2(\alpha_3) &=&2\alpha_2+\alpha_3, & s_3(\alpha_3) &=& -\alpha_3.
\end{array}
\end{equation}

The correspondence, due to the Killing form, between $\mathfrak{h}^*$ and $\mathfrak{h}$  determines an action of the Weyl group on $\mathfrak{h}$.
Specifically, a computation shows that we have the correspondence
\begin{equation}
\mathrm{diag}(\lambda_1, \lambda_2, \lambda_3, -\lambda_1, -\lambda_2, -\lambda_3) ~ \longleftrightarrow ~ 2(\lambda_1\beta_1+\lambda_2\beta_2+\lambda_3\beta_3).
\end{equation}
The actions of $s_1, s_2$, and $s_3$ on  $\mathfrak{h}$ are given by:
\begin{small}
\begin{equation}
\begin{array}{llllll}
 s_1 \cdot \mathrm{diag}( \lambda_1, \lambda_2, \lambda_3, -\lambda_1, -\lambda_2, -\lambda_3) 
=\mathrm{diag}(\lambda_{2}, \lambda_{1}, \lambda_{3}, -\lambda_{2}, -\lambda_{1}, -\lambda_{3}),\\
 s_2 \cdot \mathrm{diag}(\lambda_1, \lambda_2, \lambda_3, -\lambda_1, -\lambda_2, -\lambda_3) 
=\mathrm{diag}( \lambda_{1}, \lambda_{3}, \lambda_{2}, -\lambda_{1}, -\lambda_{3}, -\lambda_{2}),\\
s_3 \cdot \mathrm{diag}( \lambda_1, \lambda_2, \lambda_3, -\lambda_1, -\lambda_2, -\lambda_3) 
=\mathrm{diag}( \lambda_{1}, \lambda_{2}, -\lambda_{3}, -\lambda_{1}, -\lambda_{2}, \lambda_{3}).
\end{array}
\end{equation}\end{small}

We now describe a set  of generators for $\mathfrak{sp}(6, \mathbb{C})$. Number matrix rows and columns $1, 2, 3, -1, -2, -3$. Then, positive root vectors $e_\alpha$ and negative root vectors $f_\alpha$ corresponding to the simple roots are 
\begin{equation}
\begin{array}{llllll}
e_{\alpha_1} &=& E_{12}-E_{-2, -1}, & f_{\alpha_1} &=& E_{21}-E_{-1,-2},\\
e_{\alpha_2} &=& E_{2 3}-E_{-3, -2}, & f_{\alpha_2} &=& E_{32}-E_{-2, -3},\\
e_{\alpha_3} &=& E_{3,-3}, & f_{\alpha_3} &=&  E_{-3, 3}.
\end{array}
\end{equation}
These root vectors determine the following basis of the Cartan subalgebra $\mathfrak{h}$:
\begin{equation}
\begin{array}{lllllllll}
h_{\alpha_1}&=&  \mathrm{diag} (1,-1,0,-1,1,0), \\
 h_{\alpha_2}&=& \mathrm{diag} (0,1,-1,0,-1,1), \\
 h_{\alpha_3}&=&\mathrm{diag}  (0,0,1,0,0,-1).
 \end{array}
\end{equation}

A $W$-invariant inner product on $\mathfrak{h}$ is defined by
\begin{equation}
h \cdot h'= \mathrm{trace}(h h').
\end{equation}

We now wish to classify the regular subalgebras of $\mathfrak{sp}(6, \mathbb{C})$. To do so, for each closed subset $T$ of the  root system of 
type $C_3$, we will determine the possible $\mathfrak{t} \subset \mathfrak{h}$ such that $\mathfrak{s}_{T, \mathfrak{t}}$ is a regular subalgebra.
The cases in which $\mathfrak{t}$ is $0-$ or $3-$dimensional are trivial, so we only consider the $1-$ and $2-$dimensional cases.

In the following example, we explicitly describe the process of determining the regular subalgebras obtained from a given closed subset of $C_3$. Determining the regular subalgebras for all other closed subsets of $C_3$ is obtained in a similar manner, and the complete results are summarized in Tables  \ref{csregstart} through \ref{csregend}.

\begin{example}
Consider the symmetric closed subset $T=\{\alpha_1, -\alpha_1 \}$ of $C_3$,
which is located in row  79 of Table \ref{b3resultsb}. A calculation shows that the stabilizer of $T$, denoted $W_T$, is generated by 
the  elements $s_1, s_3$, and $s_2s_1s_3s_2s_1s_3s_2$.  

Clearly there is only one $1$-dimensional $\mathfrak{t}$, which is 
given by $\langle h_{\alpha_1} \rangle$.
We now determine the $2$-dimensional $\mathfrak{t}$, each of which must contain $h_{\alpha_1}$. Each such subspace is determined by a vector
orthogonal to it, and since $\mathfrak{t}$ contains $h_{\alpha_1}$, the orthogonal vector must be of the form $\mathrm{diag}(a, a, b,-a, -a, -b)$. 

 The actions of the generators of $W_T$ on  $\mathrm{diag}(a, a, b, -a, -a, -b)$ are
 \begin{equation}
 \begin{array}{rllllll}
 s_1 \cdot \mathrm{diag}(a, a, b, -a, -a, -b) &=& \mathrm{diag}(a, a, b, -a, -a, -b),\\
  s_3 \cdot \mathrm{diag}(a, a, b, -a, -a, -b) &=& \mathrm{diag}(a, a, -b, -a, -a, b),\\
   s_2s_1s_3s_2s_1s_3s_2 \cdot \mathrm{diag}(a, a, b, -a, -a, -b) &=& \mathrm{diag}(-a, -a, b, a, a, -b).
 \end{array}
 \end{equation}
 Hence, by Corollary \ref{corollary}, the space orthogonal to $\mathrm{diag}(0,a, a, b, -a, -a, -b)$ is conjugate to that 
  orthogonal to $\mathrm{diag}(a', a', b', -a', -a', -b')$ if and only if  $(a',a',b')$ is a scalar multiple of $(a,a,\pm b)$.


The results of this example are summarized in row 79 of Table \ref{csregend}.
\end{example}

\begin{small}
\begin{landscape}
\renewcommand{\arraystretch}{1.5}
\begin{table}[h!]

\vspace{2mm}
\caption{Symmetric closed subsets  $T$ of $C_3$;  the generators of the stabilizer of $T$,  denoted $W_T$; and the $1$- and $2$-dimensional subalgebras $\mathfrak{t}$ of the Cartan subalgebra of the regular subalgebras determined by $T$. We include only those closed systems of rank $1$, as the others cases are straightforward. In this table $h_{a_1, a_2, a_3}=\mathrm{diag}(a_1, a_2, a_3, -a_1, -a_2, -a_3)$.}
\label{csregend}
\end{table}
\end{landscape}
\end{footnotesize}


\begin{thebibliography}{9}

\bibitem[Bet18]{bet}
A. Betten, ``How Fast Can We Compute Orbits of Groups?'',
in: Mathematical Software – ICMS 2018, J.H. Davenport, M. Kauers, G. Labahn, J. Urban editors, LNCS, volume 10931, 62-70 (2018).  

\bibitem[Bor76]{bor2}
Z.I. Borevich, ``Description of the subgroups of the general linear group
that contain the group of diagonal matrices''. (Russian) Rings and modules.
Zap. Nau\v{c}n. Sem. Leningrad. Otdel. Mat. Inst. Steklov. (LOMI) 64 (1976),
12–29.  

\bibitem[Bor77]{bor}
Z.I. Borevich, ``On the question of the enumeration of finite topologies," (Russian) Modules and representations. Zap. Nau\v{c}n. Sem. Leningrad. Otdel. Mat. Inst. Steklov. (LOMI) 71 (1977), 47-65. 

\bibitem[Bou68]{bou}
N. Bourbaki. ``Groupes et Alg\`{e}bres de Lie, Chapitres 4, 5 et 6'',
Hermann, Paris, 1968.

\bibitem[BM02]{bm02}  
G. Brinkmann and B. D. McKay, ``Posets on up to 16 points''. Order 19 (2002),
no. 2, 147–179.

\bibitem[GAP]{GAP4}
  The GAP~Group, \emph{GAP -- Groups, Algorithms, and Programming, 
  Version 4.9.2}; 
  2018,
  \verb+(https://www.gap-system.org)+.

\bibitem[dGr00]{degraaf0}
  W. A. de Graaf: Lie Algebras: Theory and Algorithms (North Holland
  Mathematical Library, Elsevier, Amsterdam, 2000).

\bibitem[dGr17]{degraaf17}
  W. A. de Graaf: Computation with Linear Algebraic Groups (CRC PRess
  Boca Raton, 2017). 

\bibitem[dGr09]{degraafa}
  W.A. de Graaf: \emph{SLA-computing with Simple Lie algebras. a GAP package},
  Version 1.5, (2018), http://www.science.unitn.it/~degraaf/sla.html.

  
\bibitem[dGr11]{degraafb}
  W.A. de Graaf, ``Constructing semisimple subalgebras of semisimple Lie algebras,"  J. Algebra, 325, 416-430 (2011).

\bibitem[DCH94]{dch94}
  D. \v Z. \DJ okovi\'c and P. Check and J.-Y. H\'ee,
  ``On closed subsets of root systems'', Canad. Math. Bull., 37, 338-345 (1994).

\bibitem[DR15]{dc2}
A. Douglas and J. Repka, ``The Levi decomposable subalgebras of $C_2$,"  J. Math. Phys., \textbf{56} 051703,  (2015).  

\bibitem[DR16a]{drsof}
A. Douglas and J. Repka, ``The subalgebras of $\mathfrak{so}(4,\mathbb{C})$," Comm. Algebra, 44(12), (2016) 5269-5286.

\bibitem[DR16b]{a2}
A. Douglas and J. Repka, ``A classification of the subalgebras of $A_2$,"     J. Pure Appl. Algebra, 220(6), (2016) 2389-2413. 

\bibitem[DR17]{c2}
A. Douglas and J. Repka, ``The subalgebras of the rank two symplectic Lie algebra," Linear Algebra Appl., 527, (2017) 303-348.

\bibitem[DL11]{dyer}
M.J. Dyer and G.I. Lehrer, ``Reflection subgroups of finite and affine Weyl groups,"  Trans. Amer. Math. Soc., 363, 5971-6005 (2011).  

\bibitem[Dyn52]{dynkin}
E.B. Dynkin,  ``Semisimple subalgebras of semisimple Lie algebras," Matematicheskii Sbornik N.S. 30(72)
(1952), no. 2, 349-462, (Russian).

\bibitem[HV96]{harebov}
A.L. Harebov and H.A. Vavilov, ``On the lattice of subgroups of Chevalley groups containing a split maximal torus,"  Commun. Algebra, 24(1), 109-133 (1996).  

\bibitem[Hum72]{humphreys}
  J.E. Humphreys: Introduction to Lie algebras and representation theory (Springer-Verlag, New York, 1972).

  \bibitem[JJPW18]{jjpw2}  
C. Jefferson, E. Jonauskyte, M. Pfeiffer and R. Waldecker,
\emph{images - Minimal and Canonical images, a GAP package},
Version 1.1.0, (2018), http://gap-packages.github.io/images/.

\bibitem[JJPW19]{jjpw1}  
C. Jefferson, E. Jonauskyte, M. Pfeiffer and R. Waldecker, ``Minimal and
canonical images''. J. Algebra 521, 481–506 (2019).


\bibitem[LG72]{lorent}
M. Lorente and B. Gruber, ``Classification of semisimple subalgebras of simple Lie
algebras," J. Math. Phys. 13 (1972) 1639-1663.


\bibitem[Mal80]{maly2}
F.M. Malyshev, ``Decompositions of root systems," Math. Notes Acad. Sci. USSR 27(1980), 418-421.

\bibitem[May16]{may}
E.  Mayanskiy, ``The subalgebras of $G_2$,"  arXiv:1611.04070v1.


\bibitem[Ser03]{seress}
{\'A}kos Seress. ``Permutation group algorithms'', volume 152 of
{\em Cambridge Tracts in Mathematics}.
Cambridge University Press, Cambridge, 2003.  

\bibitem[Sop04]{sopkina}
E.A. Sopkina, ``On the sum of roots of a closed set,"   J. Math. Sci. 124(1) (2004), 4832-4836.



\end{thebibliography}
\end{document}